\tikzstyle{empty}=[inner sep=0pt,scale=0.05,draw=none]
\tikzstyle{A}=[thick,fill=white,circle,draw,minimum height=0.8cm,inner sep=0pt]
\tikzstyle{cbi}=[circle,scale=0.5, fill=black]
\tikzstyle{bary}=[]
\tikzstyle{patate}=[thick]
  \newcommand{\N}{\mathbb N}
  \newcommand{\K}{\mathbf K}
  \newcommand{\M}{\mathbf M}
  \newcommand{\Lang}{\mathcal L}
  \newcommand{\Kl}{\mathcal K}
  \newcommand{\A}{\mathbf A}
  \newcommand{\B}{\mathbf B}
  \newcommand{\C}{\mathbf C}
  \newcommand{\I}{[0,1]}
  \newcommand{\Ul}{\mathcal U}
  \newcommand{\U}{\mathbb U}
 \newcommand{\bit}{\begin{itemize}}
 \newcommand{\eit}{\end{itemize}}
  \newcommand{\ben}{\begin{enumerate}}
 \newcommand{\een}{\end{enumerate}}
 \newcommand{\eps}{\epsilon}
 \newcommand{\emb}[2]{{}^{\mathbf #1}{\mathbf #2}}
 \theoremstyle{definition}
\newtheorem{thm}{Theorem}
\newtheorem{cor}[thm]{Corollary}
\newtheorem{prop}[thm]{Proposition}
\newtheorem{df}[thm]{Definition}
\newtheorem{rmq}[thm]{Remark}
\newtheorem*{rmq*}{Remark}
\newtheorem*{intuition}{Intuition}
  \title{Amenability and Ramsey theory in the metric setting}
\author{Adriane Kaïchouh}
\begin{document}
\reversemarginpar

\begin{abstract}
In \cite{moore-amenability-ramsey}, Moore characterizes the amenability of automorphism groups of countable ultrahomogeneous structures by a Ramsey-type property. We extend this result to automorphism groups of metric Fraïssé structures, which encompass all Polish groups. As an application, we prove that amenability is a $G_\delta$ condition. 
\end{abstract}

\maketitle

\section*{Introduction}
   
In recent years, there has been a flurry of activity relating notions linked to amenability or groups on one side, and combinatorial conditions linked to Ramsey theory on the other side. In this paper, we extend a result of Moore (\cite[theorem 7.1]{moore-amenability-ramsey}) on the amenability of closed subgroups of $S_{\infty}$ to general Polish groups. A topological group is said to be \textit{amenable} if every continuous action of the group on a compact space admits an invariant probability measure.

Moore's result is the counterpart of a theorem of Kechris, Pestov and Todor\v{c}evi\'{c} (\cite{MR2140630}) on extreme amenability. A topological group is said to be \textit{extremely amenable} if every continuous action of the group on a compact space admits a fixed point. In the context of closed subgroups of $S_\infty$, which are exactly the automorphism groups of Fraïssé structures, Kechris, Pestov and Todor\v{c}evi\'{c} characterize extreme amenability by a combinatorial property of the associated Fraïssé class (in the case where its objects are rigid), namely, the Ramsey property. A class $\Kl$ of structures is said to have \textit{the Ramsey property} if for all structures $A$ and $B$ in $\Kl$, for all integers $k$, there is a structure $C$ in $\Kl$ such that for every coloring of the set of copies of $A$ in $C$ with $k$ colors, there exists a copy of $B$ in $C$ within which all copies of $A$ have the same color. 

Thus, extreme amenability, which provides fixed points, corresponds to colorings having a "fixed", meaning monochromatic, set. Amenability, on the other side, provides invariant measures. Since a measure is not far from being a barycenter of point masses, the natural mirror image of the Ramsey property in that setting should be for a coloring to have a "monochromatic convex combination of sets". Indeed, Tsankov (in an unpublished note) and Moore introduced a \textit{convex Ramsey property} and proved that a Fraïssé class has the convex Ramsey property if and only if the automorphism group of its Fraïssé limit is amenable. 

Besides, Kechris, Pestov and Todor\v{c}evi\'{c}'s result was extended to general Polish groups by Melleray and Tsankov in \cite{julientodor-amenablecontinuouslogic}. They use the framework of continuous logic (see \cite{MR2436146}) via the observation that every Polish group is the automorphism group of an approximately homogeneous metric structure (\cite{MR2767973}), that is of a metric Fraïssé limit (in the sense of \cite{itai-metricFraisse}). They define an \textit{approximate Ramsey property} for classes of metric structures and then show that a metric Fraïssé class has the approximate Ramsey property if and only if the automorphism group of its Fraïssé limit is extremely amenable.

In this paper, we "close the diagram" by giving a metric version of Moore's result. We replace the classical notion of a coloring with the metric one (from \cite{julientodor-amenablecontinuouslogic}) to define a \textit{metric convex Ramsey property}, and we prove the exact analogue of Moore's theorem:

\begin{thm}
Let $\Kl$ be a metric Fraïssé class, $\K$ its Fraïssé limit and $G$ the automorphism group of $\K$. 
Then $G$ is amenable if and only if $\Kl$ satisfies the metric convex Ramsey property.
\end{thm}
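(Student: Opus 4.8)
My plan is to prove the two implications separately, after first recasting the metric convex Ramsey property in a form that lives directly inside the Fraïssé limit $\K$.

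\textbf{Reformulation and standard tools.} First I would establish that $\Kl$ has the metric convex Ramsey property if and only if the following ``limit form'' holds: for all finitely generated $A \le B \le \K$, every $1$-Lipschitz $f \colon \mathrm{Emb}(A,\K)\to[0,1]$, and every $\eps>0$, there is a finitely supported probability measure $\nu = \sum_i\lambda_i\delta_{\beta_i}$ on $\mathrm{Emb}(B,\K)$ with $\mathrm{osc}_{\alpha\in\mathrm{Emb}(A,B)}\big(\sum_i\lambda_i f(\beta_i\circ\alpha)\big)\le\eps$. One direction pulls a coloring on a witnessing $C\in\Kl$ back along an embedding $C\hookrightarrow\K$ and pushes the resulting convex combination forward into $\K$; the other is a compactness argument — extend each bad finite coloring to a $1$-Lipschitz function on $\mathrm{Emb}(A,\K)$ by McShane's formula, pass to a pointwise cluster point, apply the limit form, and use that the averaged colorings form an equi-Lipschitz family on $\mathrm{Emb}(A,B)$ so that oscillation survives the pointwise limit. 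Iterating the single-coloring statement via the device ``if $\rho=\sum_i\lambda_i\delta_{\delta_i}$ and $g$ is a $1$-Lipschitz coloring then $\eta\mapsto\sum_i\lambda_i g(\delta_i\circ\eta)$ is again $1$-Lipschitz'' upgrades both forms to finitely many colorings simultaneously; I isolate this as a lemma since it is exactly what the amenability direction needs. I also recall the classical facts that $G$ is amenable iff it has the fixed point property on nonempty compact convex sets, iff the right-uniformly continuous bounded functions on $G$ carry an invariant mean, together with the dictionary (from approximate ultrahomogeneity of $\K$) that such functions are approximated by the ``finitary'' ones $g\mapsto f(g\cdot\iota_A)$, with $\iota_A\colon A\hookrightarrow\K$ the inclusion; approximate ultrahomogeneity also gives that $G$ acts with dense orbits on each $\mathrm{Emb}(A,\K)$ and that every embedding $B\to\K$ is approximated by some $g\circ\iota_B$, $g\in G$.

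\textbf{Amenability $\Rightarrow$ convex Ramsey.} Fix $A\le B\le\K$, a $1$-Lipschitz $f\colon\mathrm{Emb}(A,\K)\to[0,1]$ and $\eps>0$. Inside $[0,1]^{\mathrm{Emb}(A,\K)}$ let $Q$ be the closed convex hull of $\{\gamma\mapsto f(g\cdot\gamma):g\in G\}$: it is compact, its elements are $1$-Lipschitz, and the translation action $(g\cdot\phi)(\gamma)=\phi(g^{-1}\gamma)$ is a continuous affine action of $G$ on $Q$. By amenability $Q$ has a $G$-fixed point $\phi_0$, which is $G$-invariant and continuous, hence (dense orbits) a constant $c$. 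Since $\phi_0$ lies in the closed convex hull of an equi-$1$-Lipschitz family, the convergence to $c$ is uniform on compact sets, in particular on $\{\iota_B\circ\alpha:\alpha\in\mathrm{Emb}(A,B)\}$; so there are $g_i\in G$ and weights $\lambda_i$ with $\sup_{\alpha\in\mathrm{Emb}(A,B)}\big|\sum_i\lambda_i f(g_i\iota_B\alpha)-c\big|<\eps/2$. Putting $\beta_i:=g_i\circ\iota_B\in\mathrm{Emb}(B,\K)$, the measure $\sum_i\lambda_i\delta_{\beta_i}$ witnesses the limit form, and the reformulation step concludes.

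\textbf{Convex Ramsey $\Rightarrow$ amenability.} I construct an invariant mean on the right-uniformly continuous bounded functions on $G$ as a weak-$*$ cluster point of the averaging functionals $m_\mu=\int\!\cdot\,d\mu$. By the density above it suffices, given $1$-Lipschitz colorings $f_1,\dots,f_N$ of a common $\mathrm{Emb}(A,\K)$, elements $g_1,\dots,g_M\in G$ and $\eps>0$, to find a finitely supported probability $\mu$ on $G$ with $\big|\int f_i(g\iota_A)\,d\mu(g)-\int f_i(gg_j\iota_A)\,d\mu(g)\big|<\eps$ for all $i,j$. Pick finitely generated $B\le\K$ containing $A$ and all $g_j(A)$, so that $\iota_A=\iota_B\circ\alpha_0$ and $g_j\iota_A=\iota_B\circ\alpha_j$ for suitable $\alpha_0,\alpha_j\in\mathrm{Emb}(A,B)$. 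Apply the finitely-many-colorings limit form to $A\le B$, the $f_i$ and $\eps/2$, obtaining $\nu=\sum_l\lambda_l\delta_{\beta_l}$ on $\mathrm{Emb}(B,\K)$ with $\mathrm{osc}_{\mathrm{Emb}(A,B)}(\sum_l\lambda_l f_i(\beta_l\circ\cdot))\le\eps/2$ for each $i$; approximate each $\beta_l$ to within a small $\delta$ by $h_l\circ\iota_B$ and set $\mu:=\sum_l\lambda_l\delta_{h_l}$. As $f_i$ is $1$-Lipschitz and precomposition with $\alpha$ is a contraction, $\int f_i(g\iota_A)\,d\mu$ is within $\delta$ of $\sum_l\lambda_l f_i(\beta_l\alpha_0)$ and $\int f_i(gg_j\iota_A)\,d\mu$ is within $\delta$ of $\sum_l\lambda_l f_i(\beta_l\alpha_j)$, while these last two differ by at most $\eps/2$; taking $\delta<\eps/4$ gives the claim. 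A weak-$*$ cluster point of the $m_\mu$ along the net of triples $(\{f_i\},\{g_j\},\eps)$ is an invariant mean, so $G$ is amenable.

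\textbf{Expected main obstacle.} The genuinely delicate points are (a) getting the dictionary between right-uniformly continuous functions on $G$ and $1$-Lipschitz colorings of copies of finite structures exactly right — the correct side of uniform continuity and the correct use of approximate ultrahomogeneity — and (b) the upgrade to finitely many colorings via the ``push a coloring forward along a measure'' device, which is precisely what turns an almost-invariant vector against a single function into an actual invariant \emph{mean}. The compactness manipulations (McShane extensions; Arzelà–Ascoli to pass oscillation through pointwise limits of equi-Lipschitz families on $\mathrm{Emb}(A,B)$) are routine but must be handled with care about which embedding spaces are compact in the metric setting.
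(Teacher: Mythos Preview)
Your proposal is correct and follows essentially the same route as the paper: the ``limit form'' reformulation is exactly the paper's Proposition~\ref{ramseyK}, the upgrade to finitely many colorings is the paper's first proposition after the definition, and the dictionary with left-uniformly continuous functions on $G$ is Proposition~\ref{ramseygroupeUC}. The only cosmetic differences are which equivalent form of amenability is invoked at each end --- you use the fixed-point property on the closed convex hull of the $G$-orbit of $f$ where the paper integrates against an invariant measure on the orbit closure, and you build an invariant mean where the paper works directly with a compact $G$-space --- and that the paper uses an ultrafilter limit rather than McShane/Arzel\`a--Ascoli for the compactness step; none of this changes the substance.
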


Note that this characterization makes mention of both the group and the Fraïssé class. In the course of the proof, we provide several reformulations of the metric convex Ramsey property, among which a property that only involves the group itself. Thus, as a corollary, we obtain the following intrinsic characterization of the amenability of a Polish group.

\begin{thm}
Let $G$ be a Polish group and $d$ a left-invariant metric on $G$ which induces the topology. Then the following are equivalent.
\ben
\item The topological group $G$ is amenable.
\item For every $\eps > 0$, every finite subset $F$ of $G$, every $1$-Lipschitz map $f : (G,d) \to \I$, there exist elements $g_1,..., g_n$ of $G$ and barycentric coefficients $\lambda_1,..., \lambda_n$ such that for all $h,h' \in F$, one has
   $$\left\lvert \sum_{i=1}^n \lambda_i f(g_i h) - \sum_{i=1}^n \lambda_i f(g_i h') \right\rvert < \eps.$$
\een
\end{thm}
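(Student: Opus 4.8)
The plan is to prove $(1)\Rightarrow(2)$ directly and to obtain $(2)\Rightarrow(1)$ from Theorem~1, after recasting condition~(2) in functional-analytic terms. Let $\mathrm{UCB}_d(G)$ be the space of bounded real functions on $G$ that are uniformly continuous for $d$. Because $d$ is left-invariant, $\mathrm{UCB}_d(G)$ is exactly the sup-norm closure of the bounded $1$-Lipschitz functions, and each right translation operator $\rho_h\colon f\mapsto f(\,\cdot\,h)$ maps $\mathrm{UCB}_d(G)$ isometrically onto itself. For a finitely supported probability measure $\mu=\sum_i\lambda_i\delta_{g_i}$ on $G$ one has $\sum_i\lambda_i f(g_ih)=\langle\mu,\rho_hf\rangle$, so~(2) asks that for every $\eps,F,f$ some such $\mu$ makes the set $\{\langle\mu,\rho_hf\rangle:h\in F\}$ have diameter less than $\eps$. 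The key preliminary point is that the affine action of $G$ on the weak${}^*$-compact convex set $M$ of means of $\mathrm{UCB}_d(G)$, through the adjoints of the $\rho_h$, is (jointly) continuous: for $f$ $1$-Lipschitz one has $\|\rho_hf-\rho_{h_0}f\|_\infty\le d(e,h^{-1}h_0)$, so $h\mapsto\rho_hf$ is norm-continuous, and this extends to all of $\mathrm{UCB}_d(G)$ by density. Left-invariance of $d$ is used here in an essential way.

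For $(1)\Rightarrow(2)$: if $G$ is amenable, then the continuous affine action just described has a fixed point, i.e.\ there is a mean $m$ on $\mathrm{UCB}_d(G)$ with $\langle m,\rho_hf\rangle=\langle m,f\rangle$ for all $h\in G$ and all $f$. Since any mean satisfies $m(g)\le\sup_G g$, a Hahn--Banach argument shows that the finitely supported probability measures on $G$ are weak${}^*$-dense in $M$. Given $\eps>0$, a finite $F\subseteq G$ and a $1$-Lipschitz $f\colon G\to\I$, the map $\nu\mapsto(\langle\nu,\rho_hf\rangle)_{h\in F}$ from $M$ to $\R^F$ is weak${}^*$-continuous and sends $m$ to the constant tuple $(\langle m,f\rangle)_{h\in F}$; choosing a finitely supported probability measure $\mu=\sum_i\lambda_i\delta_{g_i}$ whose image lies within $\eps/2$ of this tuple in the sup-norm of $\R^F$ then yields~(2).

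For $(2)\Rightarrow(1)$: using the fact, recalled in the introduction, that every Polish group is the automorphism group of a metric Fraïssé limit, realize $G$ as $\mathrm{Aut}(\K)$ for the limit $\K$ of a metric Fraïssé class $\Kl$, presented so that the metric it induces on $G$ is the given $d$. I would then verify that condition~(2) is equivalent to the ``group-only'' reformulation of the metric convex Ramsey property of $\Kl$ extracted along the proof of Theorem~1: a $1$-Lipschitz colouring of the copies of a finite substructure $A$ of $\K$ transports, along the quotient map from $G$ onto the space of copies of $A$, to a $1$-Lipschitz map on $G$; a convex combination of copies of $B$ transports to a finitely supported probability measure on $G$; and the convex-Ramsey conclusion for $\Kl$ becomes exactly the inequality in~(2). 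Theorem~1 then gives that $G$ is amenable.

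I expect this last identification to be the delicate step, the crux being the gap in strength between the \emph{single} test map $f$ in~(2) and the data a finite substructure of $\K$ records, namely finitely many $1$-Lipschitz maps on $G$ at once, to arbitrary accuracy. On the group side, a proof of $(2)\Rightarrow(1)$ not using Theorem~1 would amount to producing a mean on $\mathrm{UCB}_d(G)$ invariant under all $\rho_h$ — one of the standard reformulations of amenability — hence would have to rule out the Hahn--Banach obstruction $\inf_{g\in G}\sum_{m=1}^k\bigl(f_m(gh_m)-f_m(g)\bigr)>0$ over all $1$-Lipschitz $f_1,\dots,f_k$ and $h_1,\dots,h_k\in G$. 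Condition~(2) disposes of this at once when the $f_m$ coincide (apply it to a common rescaling and average), but the genuinely multi-function case resists a naive convolution argument on $G$: one is led into a chicken-and-egg loop, the measure chosen to be almost invariant for $f_1$ needing to be so along a set of translates that only gets pinned down once the measure for $f_2$ is chosen. It is the amalgamation property of $\Kl$ — which packages finitely many maps into a single substructure — that breaks this loop, and that is why the statement is drawn as a consequence of Theorem~1 rather than proved from scratch.
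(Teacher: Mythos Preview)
Your $(1)\Rightarrow(2)$ is correct and in fact more direct than the paper's route: the paper deduces both directions from Theorem~\ref{main} together with Proposition~\ref{ramseygroupeUC} and the remark following it, whereas you go straight through the invariant-mean characterisation of amenability and the weak${}^*$-density of finitely supported probability measures among means. That buys you a self-contained argument for this implication, independent of any Fra\"iss\'e machinery.

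For $(2)\Rightarrow(1)$ your overall strategy coincides with the paper's---realise $G$ as $\mathrm{Aut}(\K)$ and invoke Theorem~\ref{main}---but your diagnosis of the ``delicate step'' is misplaced. The identification of condition~(2) with the group-only reformulation (condition~(3) of Proposition~\ref{ramseygroupeUC}) is \emph{not} a single-versus-multiple-functions issue: both statements involve one test map at a time, and the bridge between them is simply that $d$-Lipschitz maps are uniformly dense in the bounded left uniformly continuous maps (this is precisely the remark after Proposition~\ref{ramseygroupeUC}). You also need not arrange for the Fra\"iss\'e presentation to recover the particular metric $d$; only the left uniformity matters, and any compatible left-invariant metric yields the same class of uniformly continuous functions. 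The single-to-multiple-functions passage you worry about does arise, but inside the proof of Theorem~\ref{main} itself, and it is dispatched by the iterative construction of the proposition on $N$ colorings (building witnesses $\C_1,\dots,\C_N$ by repeated application of the one-coloring property), not by amalgamation. So your chain of implications closes correctly, but the accompanying commentary on where the difficulty sits and what resolves it should be revised.
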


From these theorems, we deduce some nice structural consequences about amenability. The first one follows directly from the previous theorem and the Riesz representation theorem (applied to the Samuel compactification of $G$).

\begin{cor}
Let $G$ be a Polish group. Then the following are equivalent.
\ben
\item $G$ is amenable.
\item For every right uniformly continuous function $f : G \to \I$, there exists a positive linear form $\Lambda$ of norm $1$ on $C(G, \I)$ such that for all $g \in G$, one has $\Lambda(g \cdot f) = \Lambda (f)$.
\een
\end{cor}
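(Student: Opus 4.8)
The plan is to read the statement off from Theorem 2 through the Samuel compactification and the Riesz representation theorem, so that no fresh combinatorics is needed. Write $S(G)$ for the Samuel compactification of $G$: $C(S(G))$ is canonically identified, as an ordered normed space, with the space of bounded uniformly continuous functions on $G$ for the relevant uniformity (to which both the functions appearing in (2) and the $1$-Lipschitz functions of Theorem 2 belong); the translation action of $G$ on itself extends to a continuous action $G \curvearrowright S(G)$ by homeomorphisms, making $S(G)$ a compact $G$-space in the sense of the paper's definition of amenability; and every such $f : G \to \I$ extends uniquely to $\hat f \in C(S(G))$ with values in $\I$. By the Riesz representation theorem, the positive linear functionals of norm $1$ on $C(S(G))$ are exactly the functionals $\phi \mapsto \int \phi \, d\mu$ for $\mu$ a regular Borel probability measure on $S(G)$, and under this correspondence the condition ``$\Lambda(g\cdot f) = \Lambda(f)$ for all $g \in G$'' becomes ``$\int \hat f \, d(g_* \mu) = \int \hat f \, d\mu$ for all $g \in G$''.

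For (1) $\Rightarrow$ (2): if $G$ is amenable then, applying the definition of amenability to the compact $G$-space $S(G)$, we obtain a $G$-invariant probability measure $\mu$ on $S(G)$; the associated functional $\Lambda$ is positive of norm $1$ and satisfies $\Lambda(g\cdot f) = \Lambda(f)$ for every $g \in G$ and every $f \in C(S(G))$, hence a fortiori for each individual right uniformly continuous $f : G \to \I$ — indeed this single $\Lambda$ works for all such $f$ at once.

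For (2) $\Rightarrow$ (1): I verify clause (2) of Theorem 2, which is equivalent to amenability. Fix $\eps > 0$, a finite $F \subseteq G$ and a $1$-Lipschitz $f : (G,d) \to \I$; being bounded and uniformly continuous for the uniformity underlying $S(G)$, $f$ extends to $\hat f \in C(S(G))$, and (2) supplies a probability measure $\mu$ on $S(G)$ with $\int \hat f \, d(g_* \mu) = \int \hat f \, d\mu$ for all $g \in G$. Since $G$ is dense in $S(G)$, the finitely supported measures $\sum_{i=1}^n \lambda_i \delta_{g_i}$ with $g_i \in G$ and $\lambda_1,\dots,\lambda_n$ barycentric are weak-$*$ dense in the space of probability measures on $S(G)$; choose a net of them converging weak-$*$ to $\mu$. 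Each of the finitely many maps $\nu \mapsto \bigl\lvert \int \hat f \, d(h_* \nu) - \int \hat f \, d(h'_* \nu) \bigr\rvert$, for $h, h' \in F$, is weak-$*$ continuous and vanishes at $\mu$, so along a tail of the net all of them are $< \eps$ simultaneously; picking such a member $\sum_i \lambda_i \delta_{g_i}$ and using $\int \hat f \, d(h_* \delta_{g}) = f(g h)$ produces $g_1, \dots, g_n \in G$ and barycentric $\lambda_1, \dots, \lambda_n$ with $\bigl\lvert \sum_i \lambda_i f(g_i h) - \sum_i \lambda_i f(g_i h') \bigr\rvert < \eps$ for all $h, h' \in F$. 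That is exactly clause (2) of Theorem 2, whence $G$ is amenable.

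The argument is soft; the only point demanding care is bookkeeping of conventions — which uniformity (hence which Samuel compactification) is taken, and the precise relation between the action $g \cdot f$ and the right translates $g_i h$ occurring in Theorem 2 — so that the weak-$*$ limit in the last step lands precisely on the displayed inequality of Theorem 2(2); up to replacing $f$ by $x \mapsto f(x^{-1})$ this is harmless. Equivalently, one may present the corollary as the $f$-by-$f$ equivalence between clause (2) of Theorem 2 and the assertion ``there is an invariant $\Lambda$ for $f$'', the two implications being the two weak-$*$ arguments above; no compactness beyond weak-$*$ compactness of the probability measures on $S(G)$ enters.
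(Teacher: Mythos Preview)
Your proposal is correct and follows exactly the route the paper itself indicates: the paper gives only a one-line sketch (``follows directly from the previous theorem and the Riesz representation theorem applied to the Samuel compactification of $G$''), and you have fleshed out precisely that sketch. Your closing remark about bookkeeping the left/right uniformity conventions and the direction of the $G$-action is apt, since the corollary as stated uses ``right uniformly continuous'' while Theorem~2 is phrased for the left-invariant metric; this is indeed only a matter of convention (or of replacing $f$ by $x\mapsto f(x^{-1})$) and does not affect the argument.
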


It constitutes an inversion of quantifiers in the definition of amenability, meaning that to check that a Polish group is amenable, it suffices to verify it for one function at a time. Note that Moore obtained a similar result for discrete groups. Besides, the same is true for extreme amenability with multiplicative linear forms.

Another advantage of these theorems is to express amenability in a finitary way, which allows us to compute its Borel complexity. In \cite{julien-todor-genericrepresentations}, Melleray and Tsankov use an oscillation stability property (that resembles the Ramsey property, see \cite{MR2277969}) to show that extreme amenability is a $G_\delta$ condition; we prove that the same holds for amenability. From this, a Baire category argument leads to the following sufficient condition for a Polish group to be amenable.

\begin{cor}
Let $G$ be a Polish group such that for every $n \in \N^*$, the set
$$F_n = \{ (g_1,..., g_n) \in G^n  :  \langle g_1,...,g_n \rangle \text{ is amenable (as a subgroup of $G$)} \}$$
is dense in $G^n$. Then $G$ is amenable.
\end{cor}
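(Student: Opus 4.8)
The plan is a Baire category argument in the Polish space $G^{\N}$. For $\bar g = (g_n)_{n \in \N} \in G^{\N}$, let $H_{\bar g}$ denote the closed subgroup of $G$ topologically generated by $\{g_n : n \in \N\}$. Recall that amenability is a $G_\delta$ condition; in the form needed here, this says that for any Polish group $G$ the set
$$\mathcal A := \{\bar g \in G^{\N} : H_{\bar g} \text{ is amenable}\}$$
is $G_\delta$ in $G^{\N}$, which one obtains by applying the finitary characterization (Theorem 2) to the subgroups $H_{\bar g}$, endowed with the restriction of a fixed compatible left-invariant metric $d$ on $G$, and reducing the quantifiers to countably many exactly as in that proof. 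On the other hand, since $G$ is separable we may fix a countable dense set $\{d_k : k \in \N\}$ in $G$; then $H_{\bar g} = G$ if and only if $d_k \in H_{\bar g}$ for all $k$, and each of these conditions is $G_\delta$ in $\bar g$: indeed $d_k \in H_{\bar g}$ iff for every $m \in \N^*$ there is a group word $w$ (in countably many letters) with $d(w(\bar g), d_k) < 1/m$, and for fixed $w$ and $m$ the set of such $\bar g$ is open, since $w(\bar g)$ depends continuously on finitely many coordinates of $\bar g$. Hence $\mathcal D := \{\bar g \in G^{\N} : H_{\bar g} = G\}$ is $G_\delta$, and it is plainly dense (pad any finite sequence with an enumeration of $\{d_k\}$).

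The next step is to use the density hypothesis to show that $\mathcal A$ is dense. Given a nonempty basic open box $U_1 \times \dots \times U_N \times G \times G \times \dots$ in $G^{\N}$, the density of $F_N$ provides $(g_1, \dots, g_N) \in U_1 \times \dots \times U_N$ such that $\langle g_1, \dots, g_N \rangle$ is amenable as a subgroup of $G$. The closure of an amenable topological group is amenable — for a continuous action on a compact space, the stabilizer of an invariant probability measure is closed, by continuity of $h \mapsto h_* \mu$ into the space of probability measures, and it contains the dense amenable subgroup — so $\overline{\langle g_1, \dots, g_N \rangle}$ is amenable. Taking $\bar g = (g_1, \dots, g_N, g_1, g_1, \dots)$ we get $H_{\bar g} = \overline{\langle g_1, \dots, g_N \rangle}$ amenable with $\bar g$ in the box; thus $\mathcal A$ is dense, hence dense $G_\delta$.

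Finally, $G^{\N}$ is Polish, hence a Baire space, so $\mathcal A \cap \mathcal D$ is a dense $G_\delta$, in particular nonempty; any $\bar g$ in it satisfies $G = H_{\bar g}$ with $H_{\bar g}$ amenable, whence $G$ is amenable. The only substantive input is the $G_\delta$-ness of $\mathcal A$, imported from the preceding part of the paper rather than reproved; the rest is routine, the single point requiring a little care being that the amenability required of $\langle g_1, \dots, g_n \rangle$ in the definition of $F_n$ is that of a subgroup of $G$ with the subspace topology, which need not be closed, so one must pass to its closure before padding.
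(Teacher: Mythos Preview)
Your proof is correct and follows the same Baire category strategy as the paper: intersect a dense $G_\delta$ set of sequences generating an amenable subgroup of $G$ with the dense $G_\delta$ set of sequences whose range is dense in $G$, and read off amenability of $G$ from any point in the intersection. The only cosmetic difference is that you work with the closed subgroup $H_{\bar g}$ rather than with the paper's set $F = \{(g_k) : (g_1,\dots,g_n) \in F_n \text{ for all } n\}$, and you make explicit the passage to closures (amenability of $\overline{\langle g_1,\dots,g_N\rangle}$) that the paper leaves implicit.
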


This is a slight strengthening of the fact that a Polish group whose finitely generated subgroups are amenable is itself amenable (see \cite[theorem 1.2.7]{greenleaf}).

\section{The metric convex Ramsey property}

We use the notations and conventions of \cite{julientodor-amenablecontinuouslogic}. In particular, we assume symbols of continuous languages to be Lipschitz.

\begin{df}
Let $\Lang$ be a relational continuous language, $\A$ and $\B$ two finite $\Lang$-structures and $\M$ an arbitrary $\Lang$-structure.
\bit 
\item We denote by $\emb{A}{M}$ the set of all embeddings of $\A$ into $\M$. 
We endow $\emb{A}{M}$ with the metric $\rho_\A$ defined by 
$$\rho_\A (\alpha, \alpha') = \sup_{a \in A} d(\alpha(a), \alpha'(a)).$$

\item A \textbf{coloring} of $\emb{A}{M}$ is a $1$-Lipschitz map from $(\emb{A}{M}, \rho_\A)$ to the interval $\I$.

\item We denote by $\left\langle \emb{A}{M} \right\rangle$ the set of all finitely supported probability measures on $\emb{A}{M}$. We will identify embeddings with their associated Dirac measures. 

\item If $\kappa : \emb{A}{M} \to \I$ is a coloring, we extend $\kappa$ to $\left\langle \emb{A}{M} \right\rangle$ linearly: if $\nu$ in $\left\langle \emb{A}{M} \right\rangle$ is of the form $\displaystyle{\nu = \sum_{i=1}^n \lambda_i \delta_{\alpha_i}}$, we set
$$\kappa(\nu) = \sum_{i=1}^n \lambda_i \kappa(\alpha_i).$$

\item Moreover, we extend composition of embeddings to finitely supported measures bilinearly. Namely, if $\nu$ in $\left\langle \emb{A}{B} \right\rangle$ and $\nu'$ in $\left\langle \emb{B}{M} \right\rangle$ are of the form $\displaystyle{\nu = \sum_{i=1}^n \lambda_i \delta_{\alpha_i}}$ and $\displaystyle{\nu' = \sum_{j=1}^m \lambda'_j \delta_{\alpha'_j}}$, we define
$$\nu' \circ \nu = \sum_{j=1}^m \sum_{i=1}^n \lambda'_j \lambda_i \delta_{\alpha'_j \circ \alpha_i}.$$

\item If $\nu$ is a measure in $\langle \emb{B}{M} \rangle$, we denote by $\left\langle \emb{A}{M}(\nu) \right\rangle$ the set of all finitely supported measures which factor through $\nu$ and by $\emb{A}{M}(\nu)$ the set of those which factor through $\nu$ via an embedding. 
More precisely, if $\nu \in \left\langle \emb{B}{M} \right\rangle$ is of the form $\displaystyle{\sum_{i = 1}^n \lambda_i \delta_{\beta_i}}$, we define
$$\emb{A}{M}(\nu) = \left\{ \nu \circ \delta_\alpha : \alpha \in \emb{A}{B} \right\}$$
and 
$$\left\langle \emb{A}{M}(\nu) \right\rangle = \left\{ \nu \circ \nu' : \nu' \in \left\langle \emb{A}{B} \right\rangle \right\}.$$
\eit
\end{df}

Throughout the paper, $\Kl$ will be a metric Fraïssé class in a relational continuous language and $\K$ will be its Fraïssé limit.

   \begin{df}
   The class $\Kl$ is said to have \textbf{the metric convex Ramsey property} if for every $\eps > 0$, for all structures $\A$ and $\B$ in $\Kl$, there exists a structure $\C$ in $\Kl$ such that for every coloring $\kappa : \emb{A}{C} \to \I $, there is $\nu$ in $\langle \emb{B}{C} \rangle$ such that for all $\alpha, \alpha' \in \emb{A}{B}(\nu)$, one has $\lvert \kappa(\alpha) - \kappa(\alpha') \rvert < \eps$.
   \end{df}
   
   \begin{intuition}
   In the classical setting, the Ramsey property states that given two structures $A$ and $B$, we can find a bigger structure $C$ such that whenever we color the copies of $A$ in $C$, we can find a copy of $B$ in $C$ wherein every copy of $A$ has the same color. Here, it basically says that we can find a convex combination of copies of $B$ in $C$ wherein every compatible convex combination of copies of $A$ has almost the same color (see figure \ref{dessin}).
   \end{intuition}
   
   \begin{figure}\label{dessin}
\begin{tikzpicture}[scale=0.5]
\begin{pgfonlayer}{invisible}
		\node [style=empty] (9) at (-6, 3.5) {};
		\node [style=empty] (10) at (-6, -3.5) {};
		\node [style=empty] (11) at (6, 3.5) {};
		\node [style=empty] (12) at (6, -3.5) {};
		\node [style=empty] (13) at (11, 0) {};
		\node [style=empty] (14) at (-11, 0) {};
	\end{pgfonlayer}
	\begin{pgfonlayer}{nodelayer}
		\node [style=A] (0) at (-6, 2) {$A_1$};
		\node [style=A] (1) at (-5, 0) {$A'_1$};
		\node [style=A] (2) at (-7, -2) {$A''_1$};
		\node [style=A] (3) at (6, 2) {$A_2$};
		\node [style=A] (4) at (7, 0) {$A'_2$};
		\node [style=A] (5) at (5, -2) {$A''_2$};
		\node [style=cbi] (6) at (-2, 2) {};
		\node [style=cbi] (7) at (-1, 0) {};
		\node [style=cbi] (8) at (-2, -2) {};
		\node [above of=6,xshift=0.6cm,yshift=-0.7cm,scale=0.9] {$2/3A_1+1/3A_2$};
		\node [above of=7,xshift=0.6cm,yshift=-0.7cm,scale=0.9] {$2/3A'_1+1/3A'_2$};
		\node [above of=8,xshift=0.6cm,yshift=-0.7cm,scale=0.9] {$2/3A''_1+1/3A''_2$};
		\node  (15) at (-6, -4.25) {$(B_1,2/3)$};
		\node  (16) at (6, -4.25) {$(B_2, 1/3)$};
		\node  (17) at (0, -7.75) {$C$};

	\end{pgfonlayer}
	\begin{pgfonlayer}{edgelayer}
		\draw [style=bary] (0) to (6);
		\draw [style=bary] (6) to (3);
		\draw [style=bary] (1) to (7);
		\draw [style=bary] (7) to (4);
		\draw [style=bary] (2) to (8);
		\draw [style=bary] (8) to (5);
		\draw [style=patate, bend right=90, looseness=1.25] (9) to (10);
		\draw [style=patate, bend right=270, looseness=1.25] (9) to (10);
		\draw [style=patate, bend right=90, looseness=1.25] (11) to (12);
		\draw [style=patate, bend right=270, looseness=1.25] (11) to (12);
		\draw [style=patate, bend left=90] (14) to (13);
		\draw [style=patate, bend right=90] (14) to (13);
	\end{pgfonlayer}
\end{tikzpicture}
\caption{Black points are barycenters of two corresponding copies of $A$ in $B_1$ and $B_2$ with coefficients $2/3$ and $1/3$. The metric convex Ramsey property says that all these points almost have the same color.}
   \end{figure}
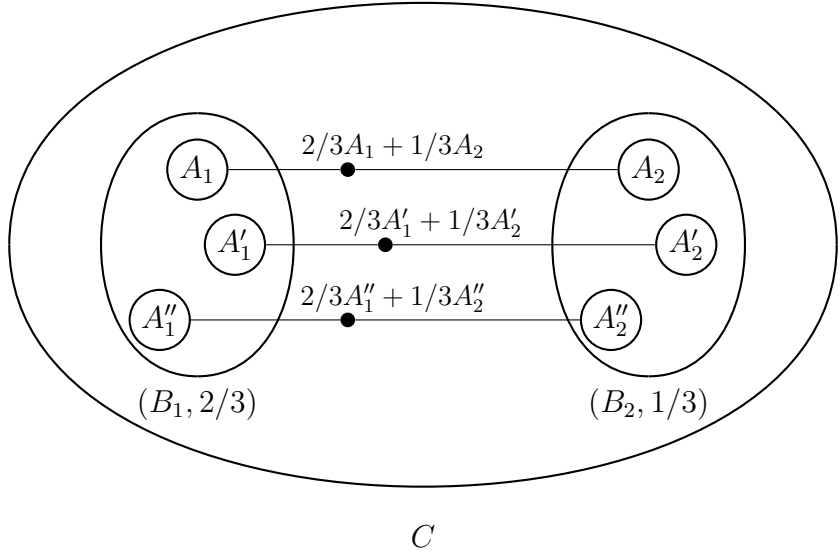

   \begin{rmq}\label{mooresecompliquelavie}
   One can replace the assumption $\alpha, \alpha' \in \emb{A}{B}(\nu)$ with the stronger one $\alpha, \alpha' \in \langle \emb{A}{B}(\nu) \rangle$ in the above definition, as is done in \cite{moore-amenability-ramsey}. Indeed, the property is preserved by barycenter. 
    \end{rmq}
   
   The following proposition states that the metric convex Ramsey property allows us to stabilize any finite number of colorings at once.
   
   \begin{prop}
   The following are equivalent.
   \ben
   \item The class $\Kl$ has the metric convex Ramsey property.
   \item For every $\eps > 0$, for all integers $N \in \N^*$ and all structures $\A$ and $\B$ in $\Kl$, there exists a structure $\C$ in $\Kl$ such that for all colorings $\kappa_1, ..., \kappa_N : \emb{A}{C} \to \I$, there is $\mu$ in $\langle \emb{B}{C} \rangle$ such that for all $j$ in $\{1,..., N\}$ and all $\alpha, \alpha'$ in $\emb{A}{C}(\mu)$, one has $\lvert \kappa_j(\alpha) - \kappa_j(\alpha') \rvert < \eps$.
   \een
   \end{prop}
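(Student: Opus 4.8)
The implication $(2) \Rightarrow (1)$ is immediate, since taking $N=1$ in $(2)$ is exactly the metric convex Ramsey property: for $\mu \in \langle \emb{B}{C} \rangle$ one has $\emb{A}{C}(\mu) = \{ \mu \circ \delta_\alpha : \alpha \in \emb{A}{B} \}$. So the content is $(1) \Rightarrow (2)$, which I would prove by iterating the metric convex Ramsey property $N$ times along a chain of structures and composing the witnessing measures.

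Fix $\eps > 0$, $N \in \N^*$ and $\A, \B \in \Kl$. Set $\C_0 := \B$ and, for $j = 1, \dots, N$, let $\C_j \in \Kl$ be given by the metric convex Ramsey property applied to $\eps$, $\A$ and $\C_{j-1}$; put $\C := \C_N$. Before the main argument I would record a few routine facts about the bilinear extension of composition. It is associative on finitely supported measures (from associativity of composition of embeddings). For $\nu' \in \langle \emb{Y}{Z} \rangle$, the map $\theta \mapsto \nu' \circ \theta$ from $\langle \emb{X}{Y} \rangle$ to $\langle \emb{X}{Z} \rangle$ is compatible with the linear extension of colorings: if $\kappa : \emb{X}{Z} \to \I$ is a coloring and $\kappa'(\gamma) := \kappa(\nu' \circ \delta_\gamma)$ for $\gamma \in \emb{X}{Y}$, then $\kappa(\nu' \circ \theta) = \kappa'(\theta)$ for all $\theta \in \langle \emb{X}{Y} \rangle$; moreover $\kappa'$ is again a coloring, since writing $\nu' = \sum_k p_k \delta_{\eta_k}$ one has $\kappa'(\gamma) = \sum_k p_k \kappa(\eta_k \circ \gamma)$, each $\gamma \mapsto \kappa(\eta_k \circ \gamma)$ is $1$-Lipschitz because embeddings preserve the metric, and a convex combination of $1$-Lipschitz maps into $\I$ is $1$-Lipschitz. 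Finally I would invoke Remark~\ref{mooresecompliquelavie} in the form: the measure $\nu \in \langle \emb{C_{j-1}}{C_j} \rangle$ witnessing the metric convex Ramsey property of $\C_j$ for $(\A, \C_{j-1})$ against a coloring $\kappa$ satisfies $\lvert \kappa(\nu \circ \theta) - \kappa(\nu \circ \theta') \rvert < \eps$ for \emph{all} $\theta, \theta' \in \langle \emb{A}{C_{j-1}} \rangle$, not merely for Dirac masses; the strict inequality survives because at each stage only finitely many values are involved, so the relevant maximum and minimum are attained.

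Now let $\kappa_1, \dots, \kappa_N : \emb{A}{C} \to \I$ be colorings. I would construct $\nu_j \in \langle \emb{C_{j-1}}{C_j} \rangle$ by downward recursion on $j$ from $N$ to $1$. Having chosen $\nu_N, \dots, \nu_{j+1}$, set $\Pi_j := \nu_N \circ \nu_{N-1} \circ \dots \circ \nu_{j+1} \in \langle \emb{C_j}{C} \rangle$ (with $\Pi_N$ trivial, so $\Pi_N \circ \delta_\gamma = \delta_\gamma$). By the previous paragraph, $\gamma \mapsto \kappa_j(\Pi_j \circ \delta_\gamma)$ is a coloring of $\emb{A}{C_j}$; call it $\kappa_j^{(j)}$ and let $\nu_j$ be a measure witnessing the metric convex Ramsey property of $\C_j$ for $(\A, \C_{j-1})$ against $\kappa_j^{(j)}$. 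Finally set $\mu := \nu_N \circ \dots \circ \nu_1 \in \langle \emb{B}{C} \rangle$, which is a finitely supported probability measure. To verify $(2)$, fix $i \in \{1, \dots, N\}$ and $\alpha, \alpha' \in \emb{A}{B}$, and put $\sigma_\alpha := \nu_{i-1} \circ \dots \circ \nu_1 \circ \delta_\alpha \in \langle \emb{A}{C_{i-1}} \rangle$ (with $\sigma_\alpha = \delta_\alpha$ if $i = 1$). Associativity gives $\mu \circ \delta_\alpha = \Pi_i \circ \nu_i \circ \sigma_\alpha$, so by compatibility $\kappa_i(\mu \circ \delta_\alpha) = \kappa_i^{(i)}(\nu_i \circ \sigma_\alpha)$, and likewise for $\alpha'$; the barycentric form of the defining property of $\nu_i$ then yields $\lvert \kappa_i(\mu \circ \delta_\alpha) - \kappa_i(\mu \circ \delta_{\alpha'}) \rvert = \lvert \kappa_i^{(i)}(\nu_i \circ \sigma_\alpha) - \kappa_i^{(i)}(\nu_i \circ \sigma_{\alpha'}) \rvert < \eps$. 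Since this holds for every $i$ and every pair $\alpha, \alpha'$, i.e. for every pair in $\emb{A}{C}(\mu)$, the measure $\mu$ witnesses $(2)$.

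The argument is essentially bookkeeping, so I do not expect a genuine obstacle; the points demanding care are the associativity and the coloring-compatibility of the bilinear composition (both straightforward from the metric-structure axioms, in particular from embeddings being isometric) and, crucially, the passage from Dirac masses to arbitrary finitely supported measures in the defining property of each $\nu_i$ — supplied by Remark~\ref{mooresecompliquelavie} — without which the measures $\sigma_\alpha$ produced by the recursion could not be fed back into the Ramsey property.
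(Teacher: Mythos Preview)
Your proof is correct and follows essentially the same approach as the paper's: build a chain $\B=\C_0,\C_1,\dots,\C_N=\C$ by iterating the one-coloring Ramsey property, then run a downward induction producing measures whose composite is the desired $\mu$. The only difference is cosmetic bookkeeping (you track the partial composites $\Pi_j$ and $\sigma_\alpha$ explicitly, whereas the paper records the accumulated measures $\mu_j$), and you are a bit more explicit than the paper in invoking Remark~\ref{mooresecompliquelavie} to justify feeding non-Dirac measures $\sigma_\alpha$ back into the Ramsey property at each stage---a point the paper uses implicitly in its final nesting argument.
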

   
   \begin{rmq*}
   Condition $(2)$ above is equivalent to the metric convex Ramsey property for colorings into $\I^N$, where $\I^N$ is endowed with the supremum metric. It follows that the metric convex Ramsey property is equivalent to the same property for colorings taking values in any convex compact metric space.
   \end{rmq*}
   
   \begin{proof}
  The second condition clearly implies the first. For the other implication, let $N$ be a positive integer, $\A$ and $\B$ structures in $\Kl$ and $\eps > 0$. Put $\C_{-1} = \A$ and $\C_0 = \B$. By induction, we find structures $\C_1,...,\C_N$ in $\Kl$ witnessing the metric convex Ramsey property for $\C_{i-1}$, $\C_i$ and $\eps$, that is, such that for every $j \in \{1,..., N\}$, if $\kappa : \emb{A}{C_j} \to \I$ is a coloring, then there exists $\nu \in \left\langle \emb{C_{j-1}}{C_j} \right\rangle$ such that for all $\alpha, \alpha'$ in $\emb{A}{C_j}(\nu)$, we have $\lvert \kappa(\alpha) - \kappa(\alpha') \rvert < \eps$.
  
  Set $\C = \C_N$. We show that $\C$ has the desired property. To this aim, let $\kappa_1,..., \kappa_N : \emb{A}{C} \to \I$ be colorings. 
  
  By downward induction, we build $\mu_N,...,\mu_0$ such that 
  \bit
  \item $\mu_n = \delta_{\text{id}_\C}$ (we can identify $\mu_n$ with $\C$),
  \item for every $j < N$, $\mu_i \in \left\langle \emb{C_j}{C}(\mu_{j+1}) \right\rangle$ (in particular, $\mu_{N-1} \in \left\langle \emb{C_{N-1}}{C} \right\rangle$ under the previous identification), 
  \item for every $j < N$, if $\alpha, \alpha' \in \emb{A}{C}(\mu_j)$, then $\lvert \kappa_j(\alpha) - \kappa_j(\alpha') \rvert < \eps$.
  \eit 
  Suppose $\mu_{j+1}$ has been constructed. We lift the coloring $\kappa_j$ to $\tilde{\kappa_j} : \emb{A}{C_{j+1}} \to \I$ by putting $\tilde{\kappa_j}(\alpha) = \kappa_j(\mu_{j+1} \circ \delta_\alpha)$. The map $\tilde{\kappa_j}$ we obtain is again a coloring. 
  
  
  Therefore, we may apply our assumption on $\C_{j+1}$ to $\tilde\kappa_j$: there exists $\nu \in \left\langle \emb{C_j}{C_{j+1}} \right\rangle$ such that for all $\alpha,\alpha' \in \emb{A}{C_{j+1}}(\nu)$, $\lvert \tilde \kappa_j(\alpha) - \tilde \kappa_j(\alpha') \rvert < \eps$. Then $\mu_j = \mu_{j+1} \circ \nu$ is as desired. Indeed, let $\alpha, \alpha' \in \emb{A}{C}(\mu_j)$. There exist $\tilde \alpha, \tilde \alpha' \in \emb{A}{C_j}$ such that $\alpha = \mu_j \circ \delta_{\tilde \alpha}$ and $\alpha' = \mu_j \circ \delta_{\tilde \alpha'}$. Then 
  \begin{align*}
  \lvert \kappa_j(\alpha) - \kappa_j(\alpha') \rvert
  &= \lvert \kappa_j(\mu_j \circ \delta_{\tilde \alpha}) - \kappa_j(\mu_j \circ \delta_{\tilde \alpha'}) \rvert \\
  &= \lvert \kappa_j(\mu_{j+1} \circ \nu \circ \delta_{\tilde \alpha}) - \kappa_j(\mu_{j+1} \circ \nu \circ \delta_{\tilde \alpha'}) \rvert \\
  &= \lvert \tilde \kappa_j(\nu \circ \delta_{\tilde \alpha}) - \tilde \kappa_j(\nu \circ \delta_{\tilde \alpha'}) \rvert \\
  &< \eps,
  \end{align*}
  thus completing the construction.
  
  Finally, put $\mu = \mu_0 \in \left\langle \emb{B}{C} \right\rangle$. Then $\mu$ is as desired. Indeed, the construction of the $\mu_j$'s gives that for every $j \in \{1,..., N\}$,
  $$\mu \in \left\langle \emb{B}{C}(\mu_1) \right\rangle \subseteq \left\langle \emb{B}{C}(\mu_2) \right\rangle \subseteq ... \subseteq \left\langle \emb{B}{C}(\mu_j) \right\rangle$$
  so that whenever $\alpha, \alpha' \in \emb{A}{C}(\mu)$, they are in $\emb{A}{C}(\mu_j)$ too and the assumption on $\mu_j$ yields that $\vert \kappa_j(\alpha) - \kappa_j(\alpha') \rvert < \eps$.
   \end{proof}
   
   We now give a infinitary reformulation of the metric convex Ramsey property which is the convex counterpart of the classical infinite Ramsey property.
      
   \begin{rmq}\label{Ncolorings}
    For the sake of simplicity, we state the results for only one coloring at a time; the previous proposition will imply that we can do the same with any finite number of colorings.
   \end{rmq}
   
   \begin{prop}\label{ramseyK}
   The following are equivalent.
   \ben
   \item The class $\Kl$ has the metric convex Ramsey property.
   \item For every $\eps > 0$, for all structures $\A$ and $\B$ in $\Kl$ and all colorings $\kappa : \emb{A}{K} \to \I$, there exists $\nu$ in $\langle \emb{B}{K} \rangle$ such that for all $\alpha, \alpha'$ in $\emb{A}{K}(\nu)$, one has $\lvert \kappa(\alpha) - \kappa(\alpha') \rvert < \eps$.
   \een
   \end{prop}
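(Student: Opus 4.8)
I would prove the two implications separately; $(1)\Rightarrow(2)$ is a routine transfer and the content is in $(2)\Rightarrow(1)$. For $(1)\Rightarrow(2)$, assume $\Kl$ has the metric convex Ramsey property and fix $\eps>0$, structures $\A,\B\in\Kl$ and a coloring $\kappa\colon\emb{A}{K}\to\I$ (we may assume $\emb{A}{B}\neq\emptyset$, else $(2)$ is vacuous). Let $\C\in\Kl$ witness the metric convex Ramsey property for $\A$, $\B$ and $\eps$, and fix an embedding $\gamma\in\emb{C}{K}$, which exists since every member of $\Kl$ embeds into its Fraïssé limit. Pull $\kappa$ back along $\gamma$: the map $\kappa^{\gamma}\colon\emb{A}{C}\to\I$ given by $\kappa^{\gamma}(\alpha)=\kappa(\gamma\circ\alpha)$ is again $1$-Lipschitz because $\gamma$ preserves the metric. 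Apply the property of $\C$ to $\kappa^{\gamma}$ to obtain $\nu_0\in\langle\emb{B}{C}\rangle$ with $\lvert\kappa^{\gamma}(\alpha)-\kappa^{\gamma}(\alpha')\rvert<\eps$ for all $\alpha,\alpha'\in\emb{A}{C}(\nu_0)$, and set $\nu=\delta_{\gamma}\circ\nu_0\in\langle\emb{B}{K}\rangle$. Since composition with $\delta_{\gamma}$ commutes with the linear extension of $\kappa$, one computes $\kappa(\nu\circ\delta_{\tilde\alpha})=\kappa^{\gamma}(\nu_0\circ\delta_{\tilde\alpha})$ for every $\tilde\alpha\in\emb{A}{B}$, so $\nu$ witnesses $(2)$.

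For $(2)\Rightarrow(1)$ I would argue by contradiction using compactness. Suppose $(1)$ fails: there are $\eps_0>0$ and $\A,\B\in\Kl$ such that for every $\C\in\Kl$ there is a coloring $\kappa_{\C}\colon\emb{A}{C}\to\I$ which is \emph{bad}, meaning that for every $\nu\in\langle\emb{B}{C}\rangle$ there are $\alpha,\alpha'\in\emb{A}{C}(\nu)$ with $\lvert\kappa_{\C}(\alpha)-\kappa_{\C}(\alpha')\rvert\geq\eps_0$. I want to manufacture a single bad coloring of $\emb{A}{K}$, which contradicts $(2)$ (applied with $\eps=\eps_0$). Let $X$ be the set of all $1$-Lipschitz maps $(\emb{A}{K},\rho_{\A})\to\I$, with the topology of pointwise convergence; being a closed subspace of $\I^{\emb{A}{K}}$, it is compact. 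For each $\nu\in\langle\emb{B}{K}\rangle$ put
$$F_{\nu}=\bigl\{\kappa\in X:\ \exists\,\alpha,\alpha'\in\emb{A}{K}(\nu)\ \text{with}\ \lvert\kappa(\alpha)-\kappa(\alpha')\rvert\geq\eps_0\bigr\}.$$
Since $\emb{A}{B}$ is finite and the measures in sight are finitely supported, each evaluation $\kappa\mapsto\kappa(\alpha)$ is continuous, so each $F_{\nu}$ is closed in $X$. If the family $(F_{\nu})_{\nu}$ has the finite intersection property, then $\bigcap_{\nu}F_{\nu}\neq\emptyset$ by compactness, and any element of the intersection is a bad coloring of $\emb{A}{K}$, contradicting $(2)$.

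To establish the finite intersection property, fix $\nu_1,\dots,\nu_k\in\langle\emb{B}{K}\rangle$ and let $\mathbf D$ be the substructure of $\K$ generated by the union of the ranges of all embeddings occurring in the supports of $\nu_1,\dots,\nu_k$. Then $\mathbf D$ is a finite structure and $\mathbf D\in\Kl$, since finitely generated substructures of a metric Fraïssé limit belong to the class; through the inclusion $\mathbf D\hookrightarrow\K$ each $\nu_j$ is the image of some $\bar\nu_j\in\langle\emb{B}{D}\rangle$, and $(\emb{A}{D},\rho_{\A})$ sits isometrically inside $(\emb{A}{K},\rho_{\A})$. Apply the badness of $\kappa_{\mathbf D}$ to each $\bar\nu_j$, then extend $\kappa_{\mathbf D}$ from $\emb{A}{D}$ to a map $\hat\kappa\in X$ by a McShane extension (followed by truncation into $\I$). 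Since $\hat\kappa$ agrees with $\kappa_{\mathbf D}$ on $\emb{A}{D}$, hence on the finitely supported measures involved, one checks that $\hat\kappa\in F_{\nu_1}\cap\dots\cap F_{\nu_k}$, as needed.

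The substantial direction is $(2)\Rightarrow(1)$, and within it the one genuinely non-formal ingredient is the fact from metric Fraïssé theory that a finitely generated substructure of the limit $\K$ again belongs to $\Kl$: this is exactly what lets finitely many finitely supported measures over $\K$ be realized inside a single member of the class, which is the crux of the finite intersection argument. Everything else — closedness of $F_{\nu}$, the extension of $\kappa_{\mathbf D}$, and the compatibility of $\delta_{\gamma}\circ(-)$ and of $\hat\kappa$ with the linear extension of colorings — should be routine bookkeeping with the definitions of $\circ$ and of $\emb{A}{M}(\nu)$.
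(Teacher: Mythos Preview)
Your proof is correct. For $(1)\Rightarrow(2)$ you do exactly what the paper does, only more explicitly: where the paper simply assumes $\C\subseteq\K$ and restricts $\kappa$ to $\emb{A}{C}$, you embed $\C$ via some $\gamma$ and pull $\kappa$ back along it.

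For $(2)\Rightarrow(1)$ both arguments are compactness arguments, but the implementations differ. The paper fixes an ultrafilter $\Ul$ on the finite subsets of $\K$ refining the filter of supersets and defines the bad coloring of $\emb{A}{K}$ directly as $\kappa=\lim_{\Ul}\kappa_{\C}$; no extension lemma is needed, since the ultralimit is automatically defined and $1$-Lipschitz on all of $\emb{A}{K}$. You instead work in the compact space $X$ of colorings, introduce the closed sets $F_{\nu}$, and verify the finite intersection property by realizing finitely many $\nu_j$'s inside a single finite $\mathbf D\in\Kl$ and then extending $\kappa_{\mathbf D}$ by McShane. Your route avoids ultrafilters and makes the role of compactness explicit, at the price of the (trivial) McShane extension step; the paper's route is slightly more direct but packages the compactness inside the ultralimit. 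Substantively these are two ways of running the same compactness argument, and either is fine.
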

   
   When we will prove, in theorem \ref{main}, that the metric convex Ramsey property is implied by amenability, it will be in the guise of condition $(2)$ above.
   
   \begin{proof}
     $(1) \Rightarrow (2)$] Fix $\eps > 0$, $\A$ and $\B$ two structures in $\Kl$ and let $\C \in \Kl$ witness the metric convex Ramsey property for $A$, $B$ and $\eps$. We may assume that $\C$ is a substructure of $\K$. Now every coloring of $\emb{A}{K}$ restricts to a coloring of $\emb{A}{C}$ so, if $\nu$ is the measure given by $\C$ for a coloring $\kappa$, then $\nu$ satisfies the desired property.
  
  $(2) \Rightarrow (1)$] We use a standard compactness argument. Suppose that $\Kl$ does not satisfy the metric convex Ramsey property. We can then find structures $\A$, $\B$ in $\Kl$ and $\eps > 0$ such that for every $\C \in \Kl$, there exists a \textit{bad} coloring $\kappa_\C$ of $\emb{A}{C}$, that is $\kappa_\C$ satisfies that for all $\nu \in \left\langle \emb{B}{C} \right\rangle$, the oscillation of $\kappa_\C$ on $\emb{A}{C}(\nu)$ is greater than $\eps$.
  
  We fix an ultrafilter $\Ul$ on the collection of subsets of $\K$ such that for every finite $D \subseteq \K$, the set $\{ E \subseteq \K \text{ finite} : D \subseteq E \}$ belongs to $\Ul$. We consider the map $\displaystyle{\kappa = \lim_\Ul \kappa_\C}$ on $\emb{A}{K}$ defined by
  $$\kappa(\alpha) = t \Leftrightarrow \forall r > 0, \{ \C \subseteq \K \text{ finite}: \kappa_\C(\alpha) \in [t - r, t + r] \} \in \Ul.$$
  Note that the assumption on $\Ul$ implies that for all $\alpha \in \emb{A}{K}$, the set $\{ \C \subseteq \K \text{ finite} : \alpha(A) \subseteq C \}$ is in $\Ul$ so $\kappa_\C(\alpha)$ is defined $\Ul$-everywhere and the above definition makes sense. Besides, since all the $\kappa_\C$ are $1$-Lipschitz, $\kappa$ is too and is thus a coloring of $\emb{A}{K}$. We prove that $\kappa$ contradicts property $(2)$.
  
  Let $\nu \in \left\langle \emb{B}{K} \right\rangle$ and write $\displaystyle{\nu = \sum_{i=1}^n \lambda_i \delta_{\beta_i}}$, with the $\beta_i$'s in $\emb{B}{K}$. Then, for all $i \in \{1,...,n\}$, the sets $\{ \C \subseteq \K \text{ finite} : \beta_i(B) \subseteq \C \}$ belong to $\Ul$ and so does their intersection $U_\nu$. 
  Furthermore, the set $\emb{A}{K}(\nu)$, which is also $\emb{A}{C}(\nu)$ for any $\C$ in $U_\nu$, is finite — note that this isn't true of $\left\langle \emb{A}{K}(\nu) \right\rangle$ (so choosing the definition of remark \ref{mooresecompliquelavie} for the Ramsey property would make the proof technically harder). For every $\C$ in $U_\nu$, there exists $\alpha, \alpha'$ in $\emb{A}{C}(\nu)$ such that $\lvert \kappa_\C(\alpha) - \kappa_\C(\alpha') \rvert \geqslant \eps$. So there exist $\alpha, \alpha'$ in $\emb{A}{K}(\nu)$ such that the set $\{ \C \subseteq \K \text{ finite} : \lvert \kappa_\C(\alpha) - \kappa_\C(\alpha') \rvert \geqslant \eps \}$ belongs to $\Ul$. By definition of $\kappa$, this implies that $\lvert \kappa(\alpha) - \kappa(\alpha') \rvert \geqslant \eps$, which shows that $(2)$ fails for $\nu$. As $\nu$ was arbitrary, this completes the proof.
   \end{proof}

\section{The metric convex Ramsey property for the automorphism group}

Let $G$ be the automorphism group of $\K$.

In this section, we reformulate the metric convex Ramsey property in terms of properties of $G$.

\begin{df}
Let $\A$ be a finite substructure of $\K$. We define a pseudometric $d_\A$ on $G$ by
$$d_\A(g,h) = \sup_{a \in A} d(g(a),h(a)).$$
We will denote by $(G,d_\A)$ the induced metric quotient space.
\end{df}

\begin{rmq}\label{pseudometricsgen}
The pseudometrics $d_\A$, for finite substructures $\A$ of $\K$, generate the left uniformity on $G$.
\end{rmq}
   
   The pseudometric $d_\A$ is the counterpart of the metric $\rho_\A$ on $\emb{A}{K}$ on the group side. More specifically, as pointed out in \cite[lemma 3.8]{julientodor-amenablecontinuouslogic}, the restriction map $\Phi_\A : (G,d_\A) \to (\emb{A}{K}, \rho_\A)$ defined by $g \mapsto g_{\restriction A}$ is distance-preserving and its image $\Phi_\A(G)$ is dense in $\emb{A}{K}$. 
\noindent As a consequence, every $1$-Lipschitz map $f : (G,d_\A) \to \I$ extends uniquely, via $\Phi_\A$, to a coloring $\kappa_f$ of $\emb{A}{K}$, while every coloring $\kappa$ of $\emb{A}{K}$ restricts to a $1$-Lipschitz map $f_\kappa : (G, d_\A) \to \I$.

   \begin{prop}\label{ramseygroupeUC}
   The following are equivalent.
   \ben
   \item The class $\Kl$ has the metric convex Ramsey property.
   \item For every $\eps > 0$, every finite substructure $\A$ of $\K$, every finite subset $F$ of $G$ and every $1$-Lipschitz map $f : (G, d_\A) \to \I$, there exist elements $g_1,..., g_n$ of $G$ and barycentric coefficients $\lambda_1,..., \lambda_n$ such that for all $h,h'$ in $F$, one has
   $$\left\lvert \sum_{i=1}^n \lambda_i f(g_i h) - \sum_{i=1}^n \lambda_i f(g_i h') \right\rvert < \eps.$$
   \item For every $\eps > 0$, every finite subset $F$ of $G$, every left uniformly continuous map $f : G \to \I$, there exist elements $g_1,..., g_n$ of $G$ and barycentric coefficients $\lambda_1,..., \lambda_n$ such that for all $h,h'$ in $F$, one has
   $$\left\lvert \sum_{i=1}^n \lambda_i f(g_i h) - \sum_{i=1}^n \lambda_i f(g_i h') \right\rvert < \eps.$$
   \een
   \end{prop}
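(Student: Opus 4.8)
The plan is to prove the chain of implications $(1) \Rightarrow (2) \Rightarrow (3) \Rightarrow (1)$, using Proposition \ref{ramseyK} to replace the ambient structure $\C$ by the Fraïssé limit $\K$ itself and using the distance-preserving correspondence $\Phi_\A$ between $(G, d_\A)$ and a dense subset of $(\emb{A}{K}, \rho_\A)$ described above.

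For $(1) \Rightarrow (2)$: given $\eps$, a finite substructure $\A \subseteq \K$, a finite set $F \subseteq G$ and a $1$-Lipschitz $f : (G, d_\A) \to \I$, I would first enlarge $\A$ to a finite substructure $\B$ of $\K$ containing $h(A)$ for every $h \in F$; replacing $F$ by a $d_\A$-dense-enough finite subset is unnecessary since $F$ is already finite. The map $f$ extends via $\Phi_\A$ to a coloring $\kappa_f$ of $\emb{A}{K}$. Apply condition $(2)$ of Proposition \ref{ramseyK} to $\A$, $\B$, $\kappa_f$ and $\eps$ to get $\nu = \sum_i \lambda_i \delta_{\beta_i} \in \langle \emb{B}{K} \rangle$ such that $\kappa_f$ has oscillation $< \eps$ on $\emb{A}{K}(\nu)$. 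Each $\beta_i \in \emb{B}{K}$ is approximated arbitrarily well in $\rho_\B$ by some $g_i \restriction B$ with $g_i \in G$ (density of $\Phi_\B(G)$), and since $\kappa_f$ is $1$-Lipschitz we may, after shrinking the slack, assume $\beta_i = g_i \restriction B$. Now for $h, h' \in F$ the embeddings $\alpha = \beta_i \circ (h \restriction A)$ wait — more carefully: for $h \in F$ the map $a \mapsto h(a)$ is an embedding of $\A$ into $\B$ (as $h(A) \subseteq B$ and $h$ is an automorphism), call it $\hat h$; then $\nu \circ \delta_{\hat h} \in \emb{A}{K}(\nu)$ and $\kappa_f(\nu \circ \delta_{\hat h}) = \sum_i \lambda_i \kappa_f(g_i \restriction B \circ \hat h) = \sum_i \lambda_i \kappa_f((g_i h) \restriction A) = \sum_i \lambda_i f(g_i h)$. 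Applying the oscillation bound to $\hat h$ and $\hat{h'}$ gives exactly $\left\lvert \sum_i \lambda_i f(g_i h) - \sum_i \lambda_i f(g_i h') \right\rvert < \eps$.

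For $(2) \Rightarrow (3)$: given a left uniformly continuous $f : G \to \I$ and $\eps$, first rescale so $f$ is $1$-Lipschitz for some pseudometric in the left uniformity; by Remark \ref{pseudometricsgen} that pseudometric is dominated by a multiple of some $d_\A$, so (after rescaling $f$, which only rescales $\eps$) we may assume $f$ factors through $(G, d_\A)$ as a $1$-Lipschitz map, and apply $(2)$. For $(3) \Rightarrow (1)$: via Proposition \ref{ramseyK} it suffices to stabilize a coloring $\kappa$ of $\emb{A}{K}$; restrict it to $f_\kappa : (G, d_\A) \to \I$, which is left uniformly continuous, take $F$ to be a finite set containing $\mathrm{id}$ and representatives $\hat h$ for a dense finite family of embeddings of $\A$ into a suitably chosen finite $\B \subseteq \K$ through which we want $\nu$ to factor — here one uses that $\emb{A}{B}$ is totally bounded so finitely many elements of $F$ suffice up to $\eps/3$ — and let $\nu = \sum_i \lambda_i \delta_{g_i \restriction B}$; the conclusion of $(3)$ then bounds the oscillation of $\kappa$ on the finitely many $\nu \circ \delta_{\hat h}$, and Lipschitzness plus density extends this to all of $\emb{A}{K}(\nu)$ at the cost of enlarging $\eps$.

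The main obstacle I expect is the bookkeeping in $(3) \Rightarrow (1)$: one must simultaneously (a) choose $\B$ large enough that the relevant copies of $\A$ we need to control are copies inside $\B$, (b) exploit total boundedness of $\emb{A}{B}$ to reduce the uncountably many embeddings $\alpha \in \emb{A}{K}(\nu)$ to finitely many up to $\eps$-error, and (c) pass between $G$-elements and their restrictions without losing track of which pseudometric controls which approximation. The Lipschitz hypothesis on $f$ and the distance-preserving property of $\Phi_\A$ make each individual estimate routine, but assembling them with a single $\eps$ (versus $\eps/3$'s) requires care; none of the three implications needs a compactness argument of its own, since Proposition \ref{ramseyK} has already done that work.
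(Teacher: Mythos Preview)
Your $(1)\Rightarrow(2)$ and $(3)\Rightarrow(1)$ match the paper's argument, with one simplification you are missing: since $\A$ and $\B$ are \emph{finite} structures, $\emb{A}{B}$ is a finite set of maps, not merely totally bounded. So in $(3)\Rightarrow(1)$ no $\eps/3$-net is needed; take $F$ to consist of one automorphism $h_\alpha$ with $\rho_\A(h_\alpha,\alpha)<\eps$ for each of the finitely many $\alpha\in\emb{A}{B}$, set $\nu=\sum_i\lambda_i\delta_{g_i\restriction B}$, and compare directly.

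There is, however, a genuine gap in your $(2)\Rightarrow(3)$. You assert that a left uniformly continuous $f:G\to\I$ becomes, after rescaling, $1$-Lipschitz for some pseudometric in the left uniformity, and that this pseudometric is then dominated by a multiple of some $d_\A$. The first step is fine (e.g.\ $\rho(x,y)=\sup_{g\in G}|f(gx)-f(gy)|$ works), but the second is not: the fact that the $d_\A$'s \emph{generate} the left uniformity (Remark~\ref{pseudometricsgen}) only says that each entourage $\{\rho<\eps\}$ contains some $\{d_\A<r\}$; it does \emph{not} yield a global bound $\rho\leqslant c\cdot d_\A$. For instance, $\sqrt{d_\A}$ is a bounded left-invariant continuous pseudometric not dominated by any $c\cdot d_\B$. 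The paper repairs this by uniformly approximating $f$ by a $d_\A$-Lipschitz function rather than pretending $f$ already is one: choose $\A$ and $r>0$ so that $d_\A(x,y)<r$ implies $|f(x)-f(y)|<\eps$, set $f_k(x)=\inf_{y\in G}\bigl(f(y)+k\,d_\A(x,y)\bigr)$, observe that $f_k$ is $k$-Lipschitz for $d_\A$ and that $\|f_k-f\|_\infty<\eps$ once $k$ is large (using boundedness of $f$), and then apply $(2)$ to $f_k/k$ with $\eps/k$.
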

   
   \begin{rmq}
   The finite subset $F$ of $G$ in condition $(2)$ is the counterpart of the structure $\B$ in the Ramsey property: by approximate ultrahomogeneity of the limit $\K$, it corresponds, up to a certain error, to the set of all embeddings of $\A$ into $\B$.
   \end{rmq}
   
   \begin{proof}
   $(1) \Rightarrow (2)$] We set $\displaystyle{\B = \A \cup \bigcup_{h \in F} h(\A)}$. Let $\kappa_f$ be the unique coloring of $\emb{A}{K}$ that extends $f$.
   We then apply proposition \ref{ramseyK} to $\A$, $\B$, $\eps$ and $\kappa_f$: there is $\nu$ in $\left\langle \emb{B}{K} \right\rangle$ such that for all $\alpha, \alpha'$ in $\emb{A}{K}(\nu)$, we have $\lvert \kappa_f(\alpha) - \kappa_f(\alpha') \rvert < \eps$.
   
 Write $\displaystyle{\nu = \sum_{i = 1}^n \lambda_i \delta_{\beta_i}}$, with the $\beta_i$'s in  $\emb{B}{K}$. Since the structure $\K$ is a Fraïssé limit, it is approximately ultrahomogeneous. This implies that for each $i$ in $\{1,..., n\}$, there exists an element $g_i$ of its automorphism group $G$ such that $\rho_\B(g_i, \beta_i) < \eps$. We show that the $g_i$'s and $\lambda_i$'s have the desired property.
   
To that end, let $h$ and $h'$ be elements of $F$. Note that, by definition of $\B$, (the restrictions of) $h$ and $h'$ are in $\emb{A}{B}$ so $\nu \circ \delta_{h}$ and $\nu \circ \delta_{h'}$ belong to $\emb{A}{K}(\nu)$. In particular, we have 
$$\left\lvert \sum_{i=1}^n \lambda_i \kappa_f(\beta_i \circ h) - \sum_{i=1}^n \lambda_i \kappa_f(\beta_i \circ h') \right\rvert = \lvert \kappa_f(\nu \circ \delta_h) - \kappa_f(\nu \circ \delta_{h'}) \rvert < \eps.$$
We now compute:
\begin{align*}
\left\lvert \sum_{i=1}^n \lambda_i f(g_i h) - \sum_{i=1}^n \lambda_i f(g_i h') \right\rvert
&\leqslant
\left\lvert \sum_{i=1}^n \lambda_i f(g_i h) - \sum_{i=1}^n \lambda_i \kappa_f(\beta_i \circ h) \right\rvert \\
&+
\left\lvert \sum_{i=1}^n \lambda_i \kappa_f(\beta_i \circ h) - \sum_{i=1}^n \lambda_i \kappa_f(\beta_i \circ h') \right\rvert \\
&+
\left\lvert \sum_{i=1}^n \lambda_i \kappa_f(\beta_i \circ h') - \sum_{i=1}^n \lambda_i f(g_i h') \right\rvert.
\end{align*}
We estimate the first piece as follows.
\begin{align*}
\left\lvert \sum_{i=1}^n \lambda_i f(g_i h) - \sum_{i=1}^n \lambda_i \kappa_f(\beta_i \circ h) \right\rvert
&\leqslant 
\sum_{i=1}^n \lambda_i \lvert f(g_i h) - \kappa_f(\beta_i \circ h) \rvert \\
&= 
\sum_{i=1}^n \lambda_i \lvert \kappa_f(g_i h) - \kappa_f(\beta_i \circ h) \rvert \\
&\leqslant \sum_{i=1}^n \lambda_i \rho_\A(g_i h, \beta_i \circ h)  \text{ because $\kappa_f$ is $1$-Lipschitz}\\
&\leqslant \sum_{i=1}^n \lambda_i \rho_\B(g_i, \beta_i) \text{ by definition of $\rho_\B$}\\
&< \eps.
\end{align*}
Similarly, we obtain 
$$\left\lvert \sum_{i=1}^n \lambda_i \kappa_f(\beta_i \circ h') - \sum_{i=1}^n \lambda_i f(g_i h') \right\rvert < \eps,$$
  hence finally
   $$\left\lvert \sum_{i=1}^n \lambda_i f(g_i h) - \sum_{i=1}^n \lambda_i f(g_i h') \right\rvert < 3\eps.$$

 $(2) \Rightarrow (3)$] We approximate uniformly continuous functions by Lipschitz ones. More precisely, let $f : G \to \I$ be left uniformly continuous and $\eps > 0$. There exists an entourage $V$ in the left uniformity $\Ul_L(G)$ on $G$ such that for all $x,y$ in $G$, if $(x,y) \in V$, then $\lvert f(x) - f(y) \rvert < \eps$. Besides, remark \ref{pseudometricsgen} implies there exist $\A \leqslant \K$ finite and $r > 0$ such that for all $x,y$ in $G$, if $d_\A(x,y) < r$, then $(x,y) \in V$.
   
   Now, for $k \in \N^*$, we can define a mapping $f_k : (G, d_\A) \to \I$ by $\displaystyle{f_k(x) = \inf_{y \in G} f(y) + k d_\A(x,y)}$. It is $k$-Lipschitz as the infimum of $k$-Lipschitz functions. And for a big enough $k$, the fact that $f$ is bounded implies that for every $x$ in $G$, $\lvert f_k(x) - f(x) \rvert < \eps$: we have obtained a good uniform approximation of $f$ by a Lipschitz function. 
      
   We then apply $(2)$ to $\frac{f_k}{k}$, which is $1$-Lipschitz, and to $\frac{\eps}{k}$: for every finite subset $F$ of $G$, there exist elements $g_1,..., g_n$ of $G$ and barycentric coefficients $\lambda_1,...,\lambda_n$ such that for all $h,h' \in F$, we have
   $$\left\lvert \sum_{i=1}^n \lambda_i \frac 1k f_k(g_i h) - \sum_{i=1}^n \lambda_i \frac 1k f_k(g_i h') \right\rvert < \frac{\eps}{k}$$
   hence 
   $$\left\lvert \sum_{i=1}^n \lambda_i f_k(g_i h) - \sum_{i=1}^n \lambda_i f_k(g_i h') \right\rvert < \eps.$$
   Then, for all $h,h' \in F$, the triangle inequality gives
   $$\left\lvert \sum_{i=1}^n \lambda_i f(g_i h) - \sum_{i=1}^n \lambda_i f(g_i h') \right\rvert < 3\eps.$$      
      
   $(3) \Rightarrow (1)$] We use the same compactness argument as in the proof of proposition \ref{ramseyK}: assume that $\Kl$ does not have the metric convex Ramsey property. We can then find $\eps > 0$, structures $\A$ and $\B$ and a coloring $\kappa : \emb{A}{K} \to \I$ such that no measure $\nu$ in $\left\langle \emb{B}{K} \right\rangle$ satisfies $\lvert \kappa(\alpha) - \kappa(\alpha') \rvert < \eps$ for all $\alpha, \alpha'$ in $\emb{A}{K}(\nu)$. 
   
   Now consider the restriction $f_\kappa$ of the coloring $\kappa$ to $(G,d_{\A})$. It is left uniformly continuous from $G$ to $\I$. Since $\Kl$ is approximately ultrahomogeneous, for every $\alpha$ in $\emb{A}{B}$, there is $h_\alpha$ in $G$ such that $\rho_{\A}(h_\alpha, \alpha) < \eps$. Let $F$ be the (finite) set of all such $h_\alpha$'s. We show that $f_\kappa$ fails to satisfy condition $(3)$ for $F$ and $\eps$. 

Indeed, towards a contradiction, assume that there exists elements $g_1,..., g_n$ of $G$ and barycentric coefficients $\lambda_1,..., \lambda_n$ such that for all $h, h'$ in $F$, one has
$$\left\lvert \sum_{i=1}^n \lambda_i f_\kappa(g_i h) - \sum_{i=1}^n \lambda_i f_\kappa(g_i h') \right\rvert < \eps.$$
Set $\displaystyle{\nu = \sum_{i=1}^n \lambda_i \delta_{g_i}\in \left\langle \emb{B}{K} \right\rangle}$. Now pick embeddings $\alpha$ and $\alpha'$ of $\A$ in $\K$; then $\nu \circ \delta_\alpha$ and $\nu \circ \delta_{\alpha'}$ are in $\left\langle \emb{A}{K}(\nu) \right\rangle$. 
Using similar arguments, we have 
$$\left\lvert \kappa(\nu \circ \delta_\alpha) - \sum_{i=1}^n \lambda_i f_\kappa(g_i h_\alpha) \right\rvert < \eps$$
and
$$\left\rvert \sum_{i=1}^n \lambda_i f_\kappa(g_i h_{\alpha'}) - \kappa(\nu \circ \delta_{\alpha'}) \right\rvert < \eps$$
so 
$$\lvert \kappa(\nu \circ \delta_{\alpha}) - \kappa(\nu \circ \delta_{\alpha'}) \rvert < 3\eps.$$

   \end{proof}
   
   Notice that condition $(3)$ does not depend on the Fraïssé class but only on its automorphism group.
   
   By virtue of remark \ref{Ncolorings}, the metric convex Ramsey property is  equivalent to condition $(3)$ for any finite number of colorings at once. It is that condition which will imply amenability in theorem \ref{main}.   
      
Moreover, if $G$ is endowed with a compatible left-invariant metric, Lipschitz functions are uniformly dense in uniformly continuous ones, so we can replace uniformly continuous maps by $1$-Lipschitz maps in condition $(3)$.
      
\section{A criterion for amenability}

Given a compact space $X$, we denote by $P(X)$ the set of all probability measures on $X$. It is a subspace of the dual space of continuous maps on $X$. Indeed, if $\mu$ is in $P(X)$ and $f$ is a continuous function on $X$, we put $\displaystyle{\mu(f) = \int_X f d\mu}$. Moreover, if we endow $P(X)$ with the induced weak* topology, it is compact.

If $G$ is a group that acts on $X$, then one can define an action of $G$ on $P(X)$ by 
$$(g \cdot \mu)(f) = \int_X f(g^{-1} \cdot x) d\mu(x).$$

\begin{df}
A topological group $G$ is said to be \textbf{amenable} if every continuous action of $G$ on a compact space $X$ admits a measure in $P(X)$ which is invariant under the action of $G$.
\end{df}

We are now ready to prove the main theorem. 

\begin{thm}\label{main}
Let $\Kl$ be a metric Fraïssé class, $\K$ its Fraïssé limit and $G$ the automorphism group of $\K$. Then the following are equivalent.
\ben
\item The topological group $G$ is amenable.
\item The class $\Kl$ has the metric convex Ramsey property.
\een
\end{thm}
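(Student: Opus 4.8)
The plan is to reduce the theorem to the purely group-theoretic condition $(3)$ of proposition~\ref{ramseygroupeUC} (which, by remark~\ref{Ncolorings}, one may use for finitely many colorings at once), and then to recognize that condition as the classical description of amenability through approximately invariant finitely supported means. For a finitely supported probability measure $\mu=\sum_i\lambda_i\delta_{g_i}$ on $G$ write $\mu(f)=\sum_i\lambda_i f(g_i)$, and for $h\in G$ write $(R_hf)(x)=f(xh)$; then condition $(3)$ says exactly that for every $\eps>0$, every finite $F\subseteq G$ and every finite family $f_1,\dots,f_N$ of bounded left uniformly continuous maps $G\to\I$, there is such a $\mu$ with $|\mu(R_hf_j)-\mu(R_{h'}f_j)|<\eps$ for all $h,h'\in F$ and all $j$ --- i.e., taking $h'$ equal to the identity, $\mu$ is $\eps$-invariant under the right translations $R_h$, $h\in F$, as measured against $f_1,\dots,f_N$.

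The framework I would use for the two implications is the Samuel compactification $S(G)$ of $G$ for its \emph{left} uniformity --- the one generated by the pseudometrics $d_\A$ (remark~\ref{pseudometricsgen}). The relevant standard facts are: $C(S(G))$ is exactly the space of bounded left uniformly continuous functions on $G$; the Dirac measures $\delta_g$, $g\in G$, span a weak$^*$-dense subset of $P(S(G))$; every right translation $R_h$ is left uniformly continuous, hence extends to $S(G)$, and these extensions make $(S(G),[e])$ the greatest $G$-ambit for this (right) action; consequently $G$ is amenable if and only if $P(S(G))$ carries a probability measure $m$ with $m(R_hf)=m(f)$ for all $h\in G$ and $f\in C(S(G))$. (Left and right conventions are interchangeable here after passing to $G^{\mathrm{op}}$; I would keep that manipulation implicit.)

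For $(1)\Rightarrow(2)$, starting from an invariant mean $m$ on $C(S(G))$ and data $\eps,F,f_1,\dots,f_N$, I would use weak$^*$-density of the finitely supported measures to choose $\mu$ with $|\mu(f_j)-m(f_j)|<\eps/2$ and $|\mu(R_hf_j)-m(R_hf_j)|<\eps/2$ for all $h\in F$ and all $j$ (only finitely many elements of $C(S(G))$ need be controlled); since $m(R_hf_j)=m(f_j)$, the triangle inequality gives the displayed form of condition $(3)$, whence the metric convex Ramsey property by proposition~\ref{ramseygroupeUC}. For $(2)\Rightarrow(1)$, I would index by the triples $(F,\Phi,\eps)$ --- $F\ni e$ finite, $\Phi$ a finite family of bounded left uniformly continuous maps into $\I$, $\eps>0$ --- ordered by reverse inclusion and decreasing $\eps$; condition $(3)$ provides, for each such triple, a finitely supported $\mu_{F,\Phi,\eps}\in P(S(G))$ that is $\eps$-invariant under all $R_h$, $h\in F$, on all of $\Phi$. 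A weak$^*$-cluster point $m$ of this net satisfies $m(R_hf)=m(f)$ for every $h\in G$ and $f\in C(S(G))$ (test against the cofinal set of triples containing that $h$ and $f$), so $S(G)$ carries an invariant measure; since $S(G)$ is the greatest ambit, restricting any continuous action $G\curvearrowright X$ on a compact space to the orbit closure of a point exhibits it as a factor of $S(G)$, and the pushforward of $m$ is a $G$-invariant measure on $X$. Hence $G$ is amenable.

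I expect the only real obstacle to be the bookkeeping of conventions rather than any substantial difficulty: one must make sure that the left uniformly continuous colorings of this paper are precisely the continuous functions on the Samuel compactification of the \emph{left} uniformity, that $G$ acts on that compactification by \emph{right} translations, and that this action is jointly continuous and universal. With that dictionary fixed, both directions are routine applications of the equivalence ``amenable $\Leftrightarrow$ existence of approximately invariant finitely supported means'', and the genuine mathematical content has already been placed into propositions~\ref{ramseyK} and~\ref{ramseygroupeUC}.
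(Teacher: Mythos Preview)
Your argument is correct, and your caution about the left/right bookkeeping is well placed: with the paper's conventions (the $d_\A$ are left-invariant, so $C(S(G))=\mathrm{LUC}(G)$), the Samuel compactification $S(G)$ with \emph{right} translations is indeed the greatest ambit for right $G$-flows, the action is jointly continuous (since $\|R_hf-R_{h_0}f\|_\infty\to 0$ by left-invariance of the $d_\A$), and amenability is equivalent to the existence of a right-invariant mean on $\mathrm{LUC}(G)$. So both directions go through as you describe.

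The route, however, differs from the paper's. For $(2)\Rightarrow(1)$ the two arguments are close: both feed condition~(3) of proposition~\ref{ramseygroupeUC} into a compactness argument, but the paper works directly on an arbitrary compact $G$-space $X$ (lifting $f_j\in C(X)$ to $\tilde f_j(g)=f_j(g^{-1}\cdot x)$ and building the approximate invariant measure on $X$ itself), while you pass through the universal object $S(G)$ and then push forward. For $(1)\Rightarrow(2)$ the difference is more substantial: the paper does \emph{not} go through condition~(3) at all but proves the infinitary form of the Ramsey property (proposition~\ref{ramseyK}) directly, by letting $G$ act on the orbit closure of the given coloring $\kappa_0$ inside $[0,1]^{\emb{A}{K}}$, applying amenability there, and using approximate ultrahomogeneity to show that $\alpha\mapsto\int\kappa(\alpha)\,d\mu(\kappa)$ is constant. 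Your approach is cleaner conceptually and makes transparent that the theorem is just the classical ``approximately invariant finitely supported means'' criterion in disguise; the paper's approach is more self-contained (it does not invoke the Samuel compactification or the mean characterization of amenability as a black box) and keeps the combinatorics of colorings visible throughout. Note also that the mean characterization you import is essentially what the paper states afterwards as a \emph{corollary} of the theorem, so in the context of the paper your route reverses the logical order somewhat.
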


\begin{proof}
$(1) \Rightarrow (2)$] Suppose $G$ is amenable and let $\A$, $\B$ be structures in the class $\Kl$, $\eps > 0$ and $\kappa_0 : \emb{A}{K} \to \I$ a coloring. We show that there exists $\nu \in \left\langle \emb{B}{K} \right\rangle$ such that for all $\alpha, \alpha' \in \emb{A}{K}(\nu)$, we have $\lvert \kappa_0(\alpha) - \kappa_0(\alpha') \rvert < \eps$, which will imply the metric convex Ramsey property (by proposition \ref{ramseyK}). We adapt Moore's proof to the metric setting.

The group $G$ acts continuously on the compact space $\I^{\emb{A}{K}}$ by $g \cdot \kappa (\alpha) = \kappa(g^{-1} \circ \alpha)$. Denote by $Y$ the orbit of the coloring $\kappa_0$ under this action and by $X$ its closure, which is compact. Note that all the functions in $X$ are colorings as well. We consider the restriction of the action to $X$: since $G$ is amenable, there is an invariant probability measure $\mu$ on $X$.

The invariance of $\mu$ implies that the map $\displaystyle{\alpha \mapsto \int_X \kappa(\alpha) d\mu(\kappa)}$ is constant on $\emb{A}{K}$. To see this, fix $\alpha, \alpha'$ in $\emb{A}{K}$ and $s>0$. Since $\K$ is approximately ultrahomogeneous, we can find $g$ in $G$ such that $\rho_\A(\alpha', g^{-1} \circ \alpha) < s$. Then
\begin{align*}
\left\lvert \int_X \kappa(\alpha') d\mu(\kappa) - \int_X \kappa(\alpha) d\mu(\kappa) \right\rvert
&= \left\lvert \int_X \kappa(\alpha') d\mu(\kappa) - \int_X g \cdot \kappa(\alpha) d\mu(g \cdot \kappa) \right\rvert \\
&= \left\lvert \int_X \kappa(\alpha') d\mu(\kappa) - \int_X \kappa(g^{-1} \circ \alpha) d\mu(\kappa) \right\rvert \\
&\leqslant \int_X \lvert \kappa(\alpha') - \kappa(g^{-1} \circ \alpha) \rvert d\mu(\kappa) \\
&\leqslant \int_X \rho_\A(\alpha', g^{-1} \circ \alpha) d\mu(\kappa) \\
&< s.
\end{align*}
Since $s$ was arbitrary, $\displaystyle{\int_X \kappa(\alpha) d\mu(\kappa) = \int_X \kappa(\alpha') d\mu(\kappa)}$. Let $r$ denote this constant value.

Besides, $Y$ being dense in $X$, the collection of finitely supported probability measures on $Y$ is dense in $P(X)$. In particular, there exist barycentric coefficients $\lambda_1,..., \lambda_n$ and elements $g_1,...,g_n$ of $G$ such that for all $\alpha$ in $\emb{A}{K}$, we have $\displaystyle{\left\lvert \sum_{i=1}^n \lambda_i \kappa_0(g_i^{-1} \circ \alpha) - r \right\rvert < \eps}$.

Finally, we may assume that $\B$ is a substructure of $\K$, and set $\beta_i = g_i^{-1} \restriction \B$, for $i$ in $\{1,...,n\}$, and $\nu = \sum_{i=1}^n \lambda_i \delta_{\beta_i} \in \left\langle \emb{B}{K} \right\rangle$. Then $\nu$ as is desired. Indeed, if $\alpha, \alpha'$ are in $\emb{A}{B}$, and thus in $\emb{A}{K}$, we have
\begin{align*}
\lvert \kappa_0(\nu \circ \delta_\alpha) - \kappa_0(\nu \circ \delta_\alpha') \rvert
&= \left\lvert \sum_{i=1}^n \lambda_i \kappa_0(\beta_i \circ \alpha) - \sum_{i=1}^n \lambda_i \kappa_0(\beta_i \circ \alpha') \right\rvert \\
&\leqslant \left\lvert \sum_{i=1}^n \lambda_i \kappa_0(\beta_i \circ \alpha) - r \right\rvert 
+ \left\lvert r - \sum_{i=1}^n \lambda_i \kappa_0(\beta_i \circ \alpha') \right\rvert \\
&=  \left\lvert \sum_{i=1}^n \lambda_i \kappa_0(g_i^{-1} \circ \alpha) - r \right\rvert 
+ \left\lvert r - \sum_{i=1}^n \lambda_i \kappa_0(g_i^{-1} \circ \alpha') \right\rvert \\
&< 2\eps.
\end{align*}

$(2) \Rightarrow (1)$] Conversely, suppose that $\Kl$ has the metric convex Ramsey property and let $G$ act continuously on a compact space $X$. We show that $X$ admits an invariant probability measure. Since $P(X)$ is compact, it suffices to show that if $f_1,..., f_N : X \to \I$ are uniformly continuous, $\eps > 0$ and $F$ is a finite subset of $G$, there exists $\mu$ in $M(X)$ such that for all $j$ in $\{1,...,N\}$ and all $h$ in $F$, $\lvert h \cdot \mu(f_j) - \mu(f_j) \rvert < \eps$. 

Fix $x$ in $X$. For $j$ in $\{1,..., N\}$, we lift $f_j$ to a map $\tilde f_j : G \to \I$ by setting $\tilde f_j(g) = f_j(g^{-1} \cdot x)$. Since the action of $G$ on $X$ is continuous and $f_j$ is uniformly continuous, the map $\tilde f_j$ is left uniformly continuous. 

We then apply proposition \ref{ramseygroupeUC} to $F \cup \{1\}$, $\eps$ and $\tilde f_1,..., \tilde f_N$ to obtain barycentric coefficients $\lambda_1,..., \lambda_n$ and elements $g_1,...,g_n$ of $G$ such that for all $j$ in $\{1,...,N\}$, for all $h$ in $F$ (and $h' = 1$), we have
$$\left\lvert \sum_{i=1}^n \lambda_i \tilde f_j(g_i h) - \sum_{i=1}^n \lambda_i \tilde f_j(g_i) \right\rvert < \eps.$$
Then $\mu = \sum_{i=1}^n \lambda_i \delta_{g_i^{-1} \cdot x}$ is as desired. Indeed, let $j \in \{1,...,N\}$ and $h \in F$. We have
$$\mu(f_j) = \sum_{i=1}^n \lambda_i f_j(g_i^{-1} \cdot x) = \sum_{i=1}^n \lambda_i \tilde f_j(g_i)$$
and
\begin{align*}
h \cdot \mu(f_j) 
&= \sum_{i=1}^n \lambda_i (h \cdot f_j)(g_i^{-1} \cdot x) \\
&= \sum_{i=1}^n \lambda_i f_j(h^{-1} g_i^{-1} \cdot x) \\
&= \sum_{i=1}^n \lambda_i \tilde f_j(g_i h)
\end{align*}
so finally
$$\left\lvert h \cdot \mu(f_j) - \mu(f_j) \right\rvert = \left\lvert \sum_{i=1}^n \lambda_i \tilde f_j(g_i h) - \sum_{i=1}^n \lambda_i \tilde f_j(g_i) \right\rvert < \eps,$$ which completes the proof.
\end{proof}

As a consequence of the remark following proposition \ref{ramseygroupeUC} and of the fact that every Polish group is the automorphism group of some metric Fraïssé structure (\cite[theorem 6]{MR2767973}), we obtain the following intrinsic characterization of amenability.

\begin{thm}\label{equiv}
Let $G$ be a Polish group and $d$ a left-invariant metric on $G$ which induces the topology. Then the following are equivalent.
\ben
\item The topological group $G$ is amenable.
\item For every $\eps > 0$, every finite subset $F$ of $G$, every $1$-Lipschitz map $f : (G,d) \to \I$, there exist elements $g_1,..., g_n$ of $G$ and barycentric coefficients $\lambda_1,..., \lambda_n$ such that for all $h,h' \in F$, one has
   $$\left\lvert \sum_{i=1}^n \lambda_i f(g_i h) - \sum_{i=1}^n \lambda_i f(g_i h') \right\rvert < \eps.$$
\een
\end{thm}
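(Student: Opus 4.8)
The plan is to derive Theorem~\ref{equiv} directly from Theorem~\ref{main} together with Proposition~\ref{ramseygroupeUC} and the cited fact (\cite[theorem 6]{MR2767973}) that every Polish group is the automorphism group of a metric Fraïssé limit. The key observation is that condition~(2) of Theorem~\ref{equiv} is precisely condition~(3) of Proposition~\ref{ramseygroupeUC}, once one has reduced uniformly continuous maps to $1$-Lipschitz maps with respect to a fixed left-invariant compatible metric. So the whole argument is an assembly of previously established equivalences, with one small analytic lemma to insert.

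First I would fix a Polish group $G$ and a left-invariant metric $d$ inducing the topology. By \cite[theorem 6]{MR2767973}, $G$ is (topologically isomorphic to) the automorphism group of the Fraïssé limit $\K$ of some metric Fraïssé class $\Kl$. Applying Theorem~\ref{main}, $G$ is amenable if and only if $\Kl$ has the metric convex Ramsey property, and by Proposition~\ref{ramseygroupeUC} (equivalence of (1) and (3)) this holds if and only if: for every $\eps > 0$, every finite $F \subseteq G$, and every left uniformly continuous $f : G \to \I$, there are $g_1,\dots,g_n \in G$ and barycentric coefficients $\lambda_1,\dots,\lambda_n$ with $\bigl\lvert \sum_i \lambda_i f(g_i h) - \sum_i \lambda_i f(g_i h') \bigr\rvert < \eps$ for all $h, h' \in F$. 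It remains to see that in this last condition one may replace ``left uniformly continuous $f$'' by ``$1$-Lipschitz $f : (G,d) \to \I$'', which is exactly the content of the remark following Proposition~\ref{ramseygroupeUC}.

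To justify that remark in the present context, I would note that since $d$ is left-invariant and compatible, the left uniformity on $G$ is metrized by $d$, so a basic entourage is $\{(x,y) : d(x,y) < r\}$. Given a left uniformly continuous $f : G \to \I$ and $\eps > 0$, I would approximate $f$ uniformly by a Lipschitz function exactly as in the proof of $(2)\Rightarrow(3)$ of Proposition~\ref{ramseygroupeUC}: set $f_k(x) = \inf_{y \in G}\bigl(f(y) + k\,d(x,y)\bigr)$, which is $k$-Lipschitz for $(G,d)$, and for $k$ large enough satisfies $\lvert f_k(x) - f(x)\rvert < \eps$ for all $x$ (using that $f$ is bounded and uniformly continuous). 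Then $\tfrac1k f_k$ is $1$-Lipschitz, and applying the $1$-Lipschitz form of the condition to $\tfrac1k f_k$ and $\tfrac\eps k$, multiplying through by $k$, and using the triangle inequality three times yields the estimate for $f$ with $3\eps$ in place of $\eps$. Since $\eps > 0$ was arbitrary, the $1$-Lipschitz version and the uniformly continuous version of the condition are equivalent, which establishes the equivalence of (1) and (2) in Theorem~\ref{equiv}.

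I do not anticipate a genuine obstacle, as all the heavy lifting is done by Theorem~\ref{main} and Proposition~\ref{ramseygroupeUC}; the only point requiring care is purely bookkeeping. Specifically, one must check that the metric $d$ appearing in Theorem~\ref{equiv} is the \emph{same} metric (up to uniform equivalence) as the one governing the left uniformity used in Proposition~\ref{ramseygroupeUC} — this is automatic because \emph{any} left-invariant compatible metric induces the canonical left uniformity of $G$, by Remark~\ref{pseudometricsgen} combined with the standard metrization theorem for Polish groups. Also worth stating explicitly is that the pseudometrics $d_\A$ for $\A \leqslant \K$ finite generate this same left uniformity, so the passage through the Fraïssé structure does not secretly change the uniform structure. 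With that noted, the proof is complete.
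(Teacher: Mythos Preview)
Your proposal is correct and follows exactly the paper's approach: the paper derives Theorem~\ref{equiv} as an immediate consequence of Theorem~\ref{main}, the equivalence with condition~(3) of Proposition~\ref{ramseygroupeUC}, the remark after that proposition reducing uniformly continuous maps to $1$-Lipschitz ones, and the cited fact that every Polish group is the automorphism group of a metric Fraïssé limit. Your write-up merely makes explicit the Lipschitz approximation and the uniformity bookkeeping that the paper leaves to the reader.
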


It implies that amenability is a $G_\delta$ condition in the following sense (see \cite[theorem 3.1]{julien-todor-genericrepresentations}).

\begin{cor}
Let $\Gamma$ be a countable group and $G$ a Polish group. Then the set of representations of $\Gamma$ in $G$ whose image is an amenable subgroup of $G$ is $G_\delta$ in $\text{Hom}(\Gamma,G)$.
\end{cor}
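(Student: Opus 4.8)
The plan is to follow the proof of the extreme amenability version in \cite[theorem 3.1]{julien-todor-genericrepresentations}, with Theorem \ref{equiv} playing the role of their extreme amenability criterion. Enumerate $\Gamma = \{\gamma_0, \gamma_1, \dots\}$. Then $\text{Hom}(\Gamma, G)$ is the set of $x \in G^\Gamma$ satisfying the closed conditions $x_\gamma x_\delta = x_{\gamma\delta}$, hence a closed subspace of the Polish space $G^\Gamma$ and thus itself Polish. For $\pi \in \text{Hom}(\Gamma, G)$, the image $\pi(\Gamma)$ is a countable subgroup, and as a topological group for the subspace topology it is amenable if and only if its closure $\overline{\pi(\Gamma)}$ is (amenability passes both ways between a topological group and a dense subgroup — one direction by weak$^*$-continuity of $g \mapsto g \cdot \mu$, the other via the corresponding statement for invariant means on uniformly continuous bounded functions). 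Since $\overline{\pi(\Gamma)}$ is a closed subgroup of $G$ it is Polish, so Theorem \ref{equiv} applies to it. Fix once and for all a compatible left-invariant metric $d$ on $G$ (Birkhoff--Kakutani); its restriction to any closed subgroup is again compatible and left-invariant.

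The key technical step is a transfer lemma: for $H$ a Polish group with compatible left-invariant metric $d_H$ and $D \leq H$ a dense subgroup, $H$ is amenable if and only if for every $\eps > 0$, every finite $F \subseteq D$ and every $1$-Lipschitz $f : (D, d_H) \to \I$, there are $g_1, \dots, g_n \in D$ and barycentric coefficients $\lambda_1, \dots, \lambda_n$ with $\left\lvert \sum_i \lambda_i f(g_i h) - \sum_i \lambda_i f(g_i h') \right\rvert < \eps$ for all $h, h' \in F$. One implication uses Theorem \ref{equiv}(2) for $H$ together with density: a $1$-Lipschitz map on $(D, d_H)$ extends uniquely to a $1$-Lipschitz map on $(H, d_H)$, Theorem \ref{equiv} provides $g_i \in H$, and since the relevant expression is continuous in the $g_i$ these can be perturbed into $D$. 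For the converse, given $\eps$, a finite $F \subseteq H$ and a $1$-Lipschitz $f$ on $H$, approximate each $h \in F$ by some $\tilde h \in D$ with $d_H(h, \tilde h) < \eps/3$; left-invariance gives $d_H(gh, g\tilde h) = d_H(h, \tilde h) < \eps/3$ for every $g \in H$, so the data furnished by the restricted condition for $f_{\restriction D}$ and $\{\tilde h : h \in F\}$ works for $F$ up to $\eps$. It is essential that it is the right-hand translates — the elements of $F$ — that get moved, since left-invariance controls $d_H(gh, g\tilde h)$ uniformly in $g$; moving the $g_i$ instead would not be controlled by left-invariance alone.

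Next I would transport the restricted condition for $D = \pi(\Gamma)$ back to $\Gamma$. For $\hat f \in \I^\Gamma$, declare $\hat f \in \mathrm{Lip}(\pi)$ iff $\lvert \hat f(\gamma) - \hat f(\delta) \rvert \leq d(\pi(\gamma), \pi(\delta))$ for all $\gamma, \delta \in \Gamma$; this forces $\hat f$ to factor through $\pi$, and the $1$-Lipschitz maps $(\pi(\Gamma), d) \to \I$ correspond bijectively to the elements of the compact set $\mathrm{Lip}(\pi) \subseteq \I^\Gamma$. For a finite $E \subseteq \Gamma$ and $\eps \in \Q$ with $\eps > 0$, put
$$U(E, \eps) = \left\{ \hat f \in \I^\Gamma : \exists\, \gamma_1, \dots, \gamma_n \in \Gamma,\ \exists\, \text{barycentric } \lambda_i,\ \forall\, \eta, \eta' \in E,\ \left\lvert \sum_i \lambda_i \hat f(\gamma_i \eta) - \sum_i \lambda_i \hat f(\gamma_i \eta') \right\rvert < \eps \right\}.$$
Each set in this union depends on only finitely many coordinates and is cut out by strict inequalities of continuous functions, so $U(E, \eps)$ is open in $\I^\Gamma$, and crucially it does not depend on $\pi$. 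Unwinding the transfer lemma and the reduction of the first paragraph yields
$$\{\, \pi \in \text{Hom}(\Gamma, G) : \pi(\Gamma) \text{ is an amenable subgroup of } G \,\} = \bigcap_{E \subseteq \Gamma \text{ finite}}\ \bigcap_{\eps \in \Q,\, \eps > 0}\ \{\, \pi : \mathrm{Lip}(\pi) \subseteq U(E, \eps) \,\},$$
a countable intersection.

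It then remains to see that each $\{\pi : \mathrm{Lip}(\pi) \subseteq U(E, \eps)\}$ is open. The set $\Delta = \{(\pi, \hat f) : \hat f \in \mathrm{Lip}(\pi)\}$ is closed in $\text{Hom}(\Gamma, G) \times \I^\Gamma$, being the intersection of the closed sets $\{(\pi, \hat f) : \lvert \hat f(\gamma) - \hat f(\delta) \rvert \leq d(\pi(\gamma), \pi(\delta))\}$, where $\pi \mapsto d(\pi(\gamma), \pi(\delta))$ is continuous. Hence $\Delta \cap \big(\text{Hom}(\Gamma, G) \times (\I^\Gamma \setminus U(E, \eps))\big)$ is closed; since $\I^\Gamma \setminus U(E, \eps)$ is compact, its projection to $\text{Hom}(\Gamma, G)$ — which is precisely $\{\pi : \mathrm{Lip}(\pi) \not\subseteq U(E, \eps)\}$ — is closed, so its complement is open. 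Intersecting over the countably many pairs $(E, \eps)$ exhibits the set in question as $G_\delta$. The place that needs genuine care is the transfer lemma of the second paragraph — specifically, verifying that the witnessing elements may be taken in the countable dense subgroup, which is where left-invariance of the metric is used; the passage from "for all $1$-Lipschitz $f$" to the countable family of $\pi$-independent open conditions $U(E, \eps)$, and the closed-projection argument, are routine.
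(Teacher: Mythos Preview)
Your argument is correct, and it is close in spirit to the paper's but organized differently in two respects worth noting.

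First, you are more careful than the paper about applying Theorem~\ref{equiv}: the paper invokes it directly for $\pi(\Gamma)$, which need not be Polish, whereas you pass to the Polish closure and prove an explicit transfer lemma pushing the witnesses back into the dense subgroup. Your observation that perturbing the $g_i$ uses only continuity of right translation and of $f$, while perturbing the elements of $F$ uses left-invariance of $d$, is exactly the right bookkeeping.

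Second, the paper inserts an additional compactness step you bypass. After transporting to $\Gamma$, the paper inverts a quantifier (as in Proposition~\ref{ramseyK}) to obtain, for each finite $F\subseteq\Gamma$, a \emph{finite} $K\subseteq\Gamma$ such that the Ramsey inequality holds for every $1$-Lipschitz $f:KF\to\I$; openness is then shown by a sequential argument using compactness of $\I^{KF}$. You instead keep the functions in $\I^{\Gamma}$, encode the Lipschitz constraint as the closed set $\Delta$, and use that projection along the compact factor $\I^{\Gamma}\setminus U(E,\eps)$ is closed. This is the same compactness in a different guise, but your packaging avoids the extra quantifier inversion and yields a cleaner topological proof; the paper's version has the mild advantage that the condition $(*)$ it isolates is finitary and could be checked by hand in examples.
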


\begin{proof}
Let $\pi$ be a representation of $\Gamma$ into $G$ and $d$ be a left-invariant metric on $G$ which induces the topology. Then, by virtue of theorem \ref{equiv}, $\pi(\Gamma)$ is amenable if and only if for every $\eps > 0$, every finite subset $F$ of $\pi(\Gamma)$, every $1$-Lipschitz function $f : (\pi(\Gamma),d) \to \I$, there exist elements $g_1,...,g_n$ of $\pi(\Gamma)$ and barycentric coefficients $\lambda_1,..., \lambda_n$ such that for all $h,h'$ in $F$, one has
$$\left\lvert \sum_{i=1}^n \lambda_i f(g_i h) - \sum_{i=1}^n \lambda_i f(g_i h') \right\rvert < \eps.$$
Using the same compactness argument as in proposition \ref{ramseyK}, one can show that the condition is equivalent to the following.

\begin{align*}
&\forall \eps > 0, \forall F \subseteq \pi(\Gamma) \text{ finite }, \exists K \subseteq \pi(\Gamma) \text{ finite },\forall f: (KF,d) \to \I \text{ $1$-Lipschitz }, \\
&\exists k_1, ..., k_n \in K, \exists \lambda_1,..., \lambda_n, \forall h,h' \in F,
\left\lvert \sum_{i=1}^n \lambda_i f(k_i h) - \sum_{i=1}^n \lambda_i f(k_i h') \right\rvert < \eps.
\end{align*}

It is easily seen that this is again equivalent to the following.
\[
\forall \eps > 0, \forall F \subseteq \Gamma \text{ finite }, \exists K \subseteq \Gamma \text{ finite },\]
\[
(*)\left\{\begin{array}{l} \forall f: KF \to \I \text{ such that } \forall \gamma, \gamma' \in KF, 
\lvert f(\gamma) - f(\gamma') \rvert \leqslant d(\pi(\gamma), \pi(\gamma')),\\  \exists k_1, ..., k_n \in K, \exists \lambda_1,..., \lambda_n, \forall h,h' \in F,
\displaystyle{\left\lvert \sum_{i=1}^n \lambda_i f(k_i h) - \sum_{i=1}^n \lambda_i f(k_i h') \right\rvert < \eps}.\end{array}\right.
\]

We now prove that, if $\eps$, $F$ and $K$ are fixed, the set of representations $\pi$ satisfying condition $(*)$ above is open, which will imply that the condition is indeed $G_\delta$.
We prove that its complement is closed. To that aim, take a sequence $(\pi_k)$ of representations in the complement that converges to some representation $\pi$. Let $f_k : KF \to \I$ witness that $\pi_k$ is in the complement. Since $KF$ is finite, maps from $KF$ to $\I$ form a compact set so we may assume that $(f_k)$ converges to some $f$. Since the Lipschitz condition is closed, $f$ also satifies that for all $\gamma, \gamma'$ in $KF$, $\lvert f(\gamma) - f(\gamma') \rvert \leqslant d(\pi(\gamma), \pi(\gamma'))$.

By the choice of $f_k$, for all $k_1,..., k_n$ in $K$ and all $\lambda_1,..., \lambda_n$, there exists $h_k, h'_k$ in $F$ such that
$$\left\lvert \sum_{i=1}^n \lambda_i f_k(k_i h_k) - \sum_{i=1}^n \lambda_i f_k(k_i h'_k) \right\rvert \geqslant \eps.$$
Since $F$ is finite, we may again assume that there are $h$ and $h'$ in $F$ such that for all $k$, we have $h_k = h$ and $h'_k = h'$. We then take the limit of the above inequality to get that
$$\left\lvert \sum_{i=1}^n \lambda_i f(k_i h) - \sum_{i=1}^n \lambda_i f(k_i h') \right\rvert \geqslant \eps,$$
which implies that $\pi$ does not satisfy condition $(*)$ either and thus completes the proof.
\end{proof}

This yields the following criterion for amenability, which can however be obtained without the use of Ramsey theory.

\begin{cor}\label{concretecriterion}
Let $G$ be a Polish group such that for every $n$ in $\N^*$, the set
$$F_n = \{ (g_1,..., g_n) \in G^n : \langle g_1,...,g_n \rangle \text{ is amenable} \}$$
is dense in $G^n$. Then $G$ is amenable.
\end{cor}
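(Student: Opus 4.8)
The plan is to deduce Corollary \ref{concretecriterion} from Theorem \ref{equiv} together with a Baire category argument applied to the $G_\delta$ sets appearing in the previous corollary. First I would fix a left-invariant metric $d$ on $G$ compatible with the topology, so that Theorem \ref{equiv} and its finitary reformulation (condition $(*)$ in the proof of the preceding corollary, with $\Gamma = G$) are available. The point is that amenability of a subgroup of $G$ is detected by the condition $(*)$, which is built out of \emph{finitely generated} subgroups: for a tuple $(g_1,\dots,g_n)\in F_n$, the group $\langle g_1,\dots,g_n\rangle$ is amenable, hence satisfies condition $(2)$ of Theorem \ref{equiv}, and this transfers quantitative information to $G$ itself.

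The key steps, in order, would be the following. Fix $\eps>0$ and a finite subset $F$ of $G$; by enlarging $F$ we may assume $F = \{h_1,\dots,h_m\}$ generates a subgroup we want to control, and we seek $g_1,\dots,g_n\in G$ and barycentric $\lambda_1,\dots,\lambda_n$ stabilizing every $1$-Lipschitz $f:(G,d)\to\I$ up to $\eps$ on $F$. The idea is that since $F_m$ is dense in $G^m$, we can approximate $(h_1,\dots,h_m)$ by a tuple $(h_1',\dots,h_m')$ generating an amenable subgroup $H = \langle h_1',\dots,h_m'\rangle$, with $d(h_i,h_i')$ as small as we like, say less than $\delta$. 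Applying condition $(2)$ of Theorem \ref{equiv} inside the amenable group $H$ (with its induced left-invariant metric, which is the restriction of $d$) to the set $\{h_1',\dots,h_m'\}$, to $\eps/3$, and to the restriction to $H$ of an arbitrary $1$-Lipschitz $f:(G,d)\to\I$, we obtain $g_1,\dots,g_n\in H\subseteq G$ and coefficients $\lambda_i$ with $\lvert \sum_i\lambda_i f(g_i h') - \sum_i \lambda_i f(g_i h'')\rvert < \eps/3$ for all $h',h''\in\{h_1',\dots,h_m'\}$. Then a triangle-inequality estimate, using that $f$ is $1$-Lipschitz, that $d$ is left-invariant (so $d(g_ih_i,g_ih_i') = d(h_i,h_i') < \delta$), and choosing $\delta<\eps/3$, upgrades this to the desired inequality on the original set $F$. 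By Theorem \ref{equiv} applied to $G$, this proves $G$ is amenable.

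There is one subtlety that I expect to be the main obstacle, namely the uniformity of the elements $g_i$ produced. In condition $(2)$ of Theorem \ref{equiv} the barycenter may depend on the function $f$, which is exactly what we need here since we only need to stabilize one $f$ at a time; so in fact no uniformity over $f$ is required, and the argument above goes through directly function by function. The genuinely delicate point is instead making sure the approximating tuple can be chosen to generate an amenable subgroup \emph{and} be close to $(h_1,\dots,h_m)$ simultaneously — but this is precisely the density of $F_m$ in $G^m$, which is the hypothesis. A cleaner route, and the one I would actually write, is to observe that the proof of the previous corollary shows amenability is equivalent to condition $(*)$, that $(*)$ is manifestly a property witnessed by finitely many group elements at a time, and that density of each $F_n$ lets us, given any finite configuration, perturb it into an amenable finitely generated subgroup where $(*)$ is known to hold by Theorem \ref{equiv}, then transfer back by left-invariance and continuity of $d$; alternatively, as the statement notes, one can bypass Ramsey theory entirely and invoke \cite[theorem 1.2.7]{greenleaf} after checking that the closure of the union of the finitely generated amenable subgroups is all of $G$ and that amenability passes to such closures. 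I would present the first argument in detail and mention the second as a remark.
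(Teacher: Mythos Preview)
Your opening line announces a Baire category argument on the $G_\delta$ sets from the previous corollary --- which \emph{is} the paper's proof --- but then you never run it; what you actually write is a direct approximation: perturb $F=\{h_1,\dots,h_m\}$ to a nearby tuple in $F_m$, apply condition (2) of Theorem \ref{equiv} inside $H=\langle h_1',\dots,h_m'\rangle$, and transfer back via left-invariance of $d$. This is essentially correct and is the ``direct'' argument the paper records in the remark immediately following the corollary (phrased there via almost-invariant measures on a compact $G$-space rather than via Theorem \ref{equiv}). One caveat: Theorem \ref{equiv} is stated for Polish groups, and $H$ need not be closed in $G$; you should pass to the closure $\overline{H}$, which is Polish and still amenable, before invoking it. The paper's own proof is different and genuinely uses Baire category: each $F_n$ is dense $G_\delta$ by the preceding corollary, so in $G^{\N}$ the set of sequences with every initial segment in the corresponding $F_n$, intersected with the (dense $G_\delta$) set of sequences dense in $G$, is nonempty; any sequence in the intersection generates a dense subgroup that is an increasing union of amenable groups, hence a dense amenable subgroup, hence $G$ is amenable. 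Your route bypasses the $G_\delta$ machinery entirely and makes transparent why mere density of the $F_n$ suffices; the paper's route exploits that machinery to manufacture a single dense amenable subgroup in one stroke. Finally, your Greenleaf ``alternative'' does not work as stated: the hypothesis does not say that every finitely generated subgroup of $G$ is amenable, only that a dense set of generating tuples yields amenable subgroups, so \cite[theorem 1.2.7]{greenleaf} cannot be invoked directly --- producing a dense amenable subgroup to which such a principle applies is precisely what the Baire argument accomplishes.
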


\begin{proof}
We use a Baire category argument.
By virtue of the above corollary, for all $n$, the set $F_n$ is dense $G_\delta$ in $G^n$. As a result, the set
$$F = \{ (g_k) \in G^\N : \forall n, (g_1,..., g_n) \in F_n \}$$
is dense and $G_\delta$ too.
Besides, the set of sequences which are dense in $G$ is also dense and $G_\delta$. Then the Baire category theorem gives a sequence $(g_k)$ in their intersection. Thus, the group generated by the $g_k$'s is dense and amenable and so is $G$.
\end{proof}

Note that since compact groups are amenable, it implies in particular that a group in which the tuples that generate a compact subgroup are dense is amenable. 

\begin{rmq*}
The criterion of corollary \ref{concretecriterion} can also be proven directly using the following compactness argument. Let $G$ act continuously on a compact space $X$. For every finite subset $F$ of $G$ and every entourage $V$ in the uniformity on $P(X)$, we approximate the elements of $F$ by a tuple in some $F_n$ to find a measure $\mu_{F,V}$ in $P(X)$ which is $V$-invariant by every element of $F$. Since $P(X)$ is compact, the net $\{ \mu_{F,V} \}$ admits a limit point, which is invariant under the action of $G$.

Note that, in view of this direct argument, it is enough to ask that the set of tuples which generate a subgroup that is \textit{included} in an amenable one be dense.

The same argument works with extreme amenability as well and it allows to slightly simplify the arguments of \cite{julien-todor-genericrepresentations}: to show that the groups $\text{Iso}(\U)$, $U(H)$ and $\text{Aut}(X,\mu)$, Melleray and Tsankov use their theorem 7.1 along with the facts that extreme amenability is a $G_\delta$ property and that Polish groups are generically $\aleph_0$-generated. This is not necessary, as the core of their proof is basically the above criterion: in each case, they prove that the set of tuples which generate a subgroup that is contained in an extremely amenable group (some $L^0(U(m))$, as it happens) is dense.
\end{rmq*}

\section{Concluding remarks}

One would expect the characterization of theorem \ref{main} to yield new examples of amenable groups or at least simpler proofs of the amenability of known groups. However, proving the convex Ramsey property for a concrete Fraïssé class is quite technical and difficult. Indeed, we have no example of a metric class which satisfies the metric convex Ramsey property but not the metric (approximate) Ramsey property.

The only such example that we know of is discrete: it is the class of finite sets with no additional structure. It is well known that its automorphism group, $S_\infty$, is amenable but not extremely amenable. In fact, the class of finite sets has the classical Ramsey property (it follows from the Ramsey theorem), but since finite sets are not rigid (every permutation is an automorphism), Kechris, Pestov and Todor\v{c}evi\'{c}'s result does not apply. However, we can still use this classical Ramsey property to recover the amenability: we circumvent the problem of non-rigidity by averaging the colors of all permutations of the small structure $A$ to obtain the convex Ramsey property. We do not know if this technique generalizes to other nonrigid classes.

Maybe our characterization can be used the other way round, that is, to find new Ramsey-type results. There is also hope that the criterion of corollary \ref{concretecriterion} may lead to (new) examples of amenable groups. 

\subsection*{Acknowledgements}

My heartfelt thanks go to Julien Melleray for his constant help and guidance as well as for various improvements on this paper. I am also very grateful to Lionel Nguyen Van Thé with whom I have had insightful discussions about Ramsey theory. Finally, I would like to thank François Le Maître for pointing out a direct proof of corollary 19.

\bibliographystyle{plain}
\bibliography{/Users/adrianekaichouh/Dropbox/Maths/Bibliographie/biblio}

\end{document}